\theoremstyle{plain}
\newtheorem{thm}{Theorem}
\newtheorem{lem}[thm]{Lemma}
\newtheorem{prop}[thm]{Proposition}
\newtheorem{cor}[thm]{Corollary}
\theoremstyle{definition}
\newtheorem{defn}[thm]{Definition}
\newtheorem{example}[thm]{Example}
\newtheorem*{example*}{Example}
\newtheorem*{rem*}{Remark}
\newtheorem{rem}[thm]{Remark}
\newcommand{\N}{{\mathbb{N}}}
\newcommand{\R}{\mathbb{R}}
\newcommand{\Z}{\mathbb{Z}}
\newcommand{\DD}{D}
\newcommand{\wh}{\mathcal{W}}
\newcommand{\neio}{{\mathcal{N}_\Omega}}
\newcommand{\ind}{{\mathbf{1}}}
\newcommand{\Omint}{\Omega^{\rm int}}
\newcommand{\Omext}{\Omega^{\rm ext}}
\DeclareMathOperator{\interior}{int}
\DeclareMathOperator{\dist}{dist}
\DeclareMathOperator{\diam}{diam}
\DeclareMathOperator{\ext}{ext}
\DeclareMathOperator{\inr}{inr}
\numberwithin{equation}{section}
\def\kint_#1{\mathchoice%
          {\mathop{\kern 0.2em\vrule width 0.6em height 0.69678ex depth -0.58065ex
                  \kern -0.8em \intop}\nolimits_{\kern -0.4em#1}}%
          {\mathop{\kern 0.1em\vrule width 0.5em height 0.69678ex depth -0.60387ex
                  \kern -0.6em \intop}\nolimits_{#1}}%
          {\mathop{\kern 0.1em\vrule width 0.5em height 0.69678ex depth -0.60387ex
                  \kern -0.6em \intop}\nolimits_{#1}}%
          {\mathop{\kern 0.1em\vrule width 0.5em height 0.69678ex depth -0.60387ex
                  \kern -0.6em \intop}\nolimits_{#1}}}
\renewcommand{\d}{\textnormal{d}}
\newcommand{\eps}{\varepsilon}
\newcommand{\WspOm}{W^{s,p}(\Omega|\Omega^c)}
\newcommand{\VsOm}{V^s(\Omega|\R^d)}
\title{Function spaces and extension results for nonlocal Dirichlet problems}
\subjclass[2010]{46E35, 47G20, 35D30, 35S15}
\keywords{Nonlocal Dirichlet problem, trace theorem, extension 
operator, integrodifferential equation}
\author[B{.} Dyda]{Bart{\l}omiej Dyda}
\author[M{.} Kassmann]{Moritz Kassmann}
\address[B.D.]{Faculty of Pure and Applied Mathematics\\ Wroc{\l}aw University 
of Science and Technology\\
Wybrze\.ze Wyspia\'nskiego 27,
50-370 Wroc{\l}aw, Poland
}
\email{bdyda@pwr.edu.pl}
\address[M.K.]{Fakult\"{a}t f\"{u}r Mathematik\\Universit\"{a}t 
Bielefeld\\Postfach 100131\\D-33501 Bielefeld}
\email{moritz.kassmann@uni-bielefeld.de}
\thanks{B.D. was partially supported by grant NCN 2015/18/E/ST1/00239.}
\thanks{B.D. acknowledges support by DFG-Sonderforschungsbereich 701.} 
\begin{document}

\begin{abstract}
We study function spaces and extension results in relation with Dirichlet 
problems involving integrodifferential operators. For such problems, data 
are prescribed on the complement of a given domain $\Omega \subset \R^d$. 
We introduce a function space that serves as a trace space for nonlocal 
Dirichlet problems and study related extension results.
\end{abstract}

\maketitle

\section{Introduction}

In this work, we study function spaces related to Dirichlet problems for a class 
of integrodifferential 
operators, which satisfy the maximum principle. We introduce a new function 
space, which can be understood as a nonlocal trace space. Let us illustrate our 
task with a very 
simple problem. Let $\Omega = B_1 \subset \R^d$ be the unit ball and assume 
$0 < s < 1$. We ask ourselves the question, for which 
functions $g : \R^d \setminus \Omega \to \R$, there is a function $u:\R^d \to 
\R$ satisfying 

\begin{alignat}{2}
\lim\limits_{\eps \to 0} \int\limits_{\R^d \setminus B_\varepsilon} 
\frac{u(x\!+\!h) \!-\! u(x)}{|h|^{d+2s}}\,
\mathrm{d} h &= 0 &&\text{ for } x \in \Omega\,, \label{eq:Dprobeqn}\\
 u(x) &= g(x) \quad &&\text{ for } x \in \R^d \setminus 
\Omega\,. \label{eq:Dprobdata}  
\end{alignat}

Note that \eqref{eq:Dprobeqn} is equivalent to $(-\Delta)^s u = 0$ in $\Omega$. 
In order to discuss the possible choices of data $g$, we need to specify the 
function space of possible solutions $u$. Moreover, we 
have to explain in which sense the above equation is 
to be understood. Since the validity of \eqref{eq:Dprobeqn} for some 
$x \in 
\Omega$ involves values of $u$ on $\R^d \setminus \Omega$, where $u=g$ is 
imposed, there is a direct link between the function 
space for solutions $u$ and the function space for the data $g$. 

\medskip

The set-up of boundary value problems is well understood for differential 
operators, i.e., in the limit case $s=1$. However, by considering our 
results for $s \to 1^{-}$, we will obtain a new extension result for classical 
Sobolev 
spaces, cf. \autoref{cor:limit_case-example} and \autoref{cor:limit_case}. 

\medskip

Let us explain how to define a variational solution $u$ satisfying 
\eqref{eq:Dprobeqn}--\eqref{eq:Dprobdata}, cf. \cite{DRV14, FKV15}. Define two 
vector spaces by 
\begin{align*}
\VsOm &= \big\{ v \in L^2_{\operatorname{loc}}(\R^d) | 
\int\limits_\Omega \int\limits_{\R^d} \frac{\big(v(y) - v(x) 
\big)^2}{|x-y|^{d+2s}} \d x \, \d y < 
\infty \big\}, \\
H^s_\Omega(\R^d) &= V^s_0(\Omega|\R^d) = \{v \in V^s(\Omega|\R^d)| v= 0 \text{ 
on } \Omega^c \}\,. 
\end{align*}
Let us collect a few basic observations on these spaces.
\begin{enumerate}
 \item $V^s(\R^d|\R^d)$ and $V^s(\R^d|\R^d) \cap L^2(\R^d)$ equal the Sobolev-Slobodeckij space 
$\dot{H}^s(\R^d)$ and $H^s(\R^d)$, respectively.
\item $H^s_\Omega(\R^d)$ is a Banach space together with the norm
\begin{align*}
\|v\|_{H^s}^2 = \|v\|^2_{L^2} + (1\!-\!s) \int\limits_{\R^d} \int\limits_{\R^d} 
\frac{\big(v(y) - v(x) 
\big)^2}{|x-y|^{d+2s}} \d y \d x 
\end{align*}
\end{enumerate}

Let us define the notion of a variational solution. 

\begin{defn}[cf. Definition 2.5 in \cite{FKV15}]\label{def:var-sol}
Let $\Omega \subset \R^d$ be open such that $\Omega$ and $\Omega^c$ both have 
positive measure. Let $g \in \VsOm$. Then $u \in \VsOm$ is 
called a 
variational solution to \eqref{eq:Dprobeqn}--\eqref{eq:Dprobdata}, if $u-g \in 
H^s_\Omega(\R^d)$ and for every $\varphi \in H^s_\Omega(\R^d)$ 
\begin{align}
\int\limits_{\R^d} \int\limits_{\R^d} \frac{\big(u(y) - u(x) 
\big) \big(\varphi(y) - \varphi(x) \big) }{|x-y|^{d+2s}} \d y \d x = 0 \,. 
\end{align}
\end{defn}

\begin{rem} (i) The above definition implies that solutions $u$ belong to 
$L^2(\R^d,\mathrm{d}m)$ with $m(\mathrm{d}x) = 
(1+|x|)^{-d-2s} \, \mathrm{d}x$. It would be possible to work under 
the assumption $u \in L^1(\R^d, \mathrm{d}m)$, but the presentation would be 
less transparent. (ii) In peridynamics, the definition of variational solutions 
to nonlocal boundary value problems looks similar, cf. \cite{DuMe14}. However, 
it is rather different because of the usage of more restrictive 
function spaces. Regularity of $u$ respectively $g$ is required in regions, 
which are away from that region, where the nonlocal equation is considered. The 
above definition avoids such an assumption.    
\end{rem}

With the above 
definition at hand, we are now in the position to explain the main question 
addressed in this 
article. In order to apply \autoref{def:var-sol} one needs to prescribe the 
data 
function $g$ in the vector space $\VsOm$, i.e. in particular one needs 
to 
prescribe all values of $g$ in $\R^d$. This leads to the following question: 

\medskip

\begin{tabbing}
{\bf Question:} \= For which Banach space of functions $g: \Omega^c \to \R$ \\ 
\> (a) is there an extension operator $g \mapsto \ext(g) \in \VsOm$, and \\ 
\> (b) is there a trace operator from $\VsOm$ into this 
space? 
\end{tabbing}

\medskip

Extension and trace theorems are well known in the study of classical 
local Dirichlet problems. Thus, for the case of Sobolev spaces of integer 
order, these questions are classical and answers were given long time 
ago , cf. 
\cite{Slo58} for an early work and \cite{AdFo03} for a general exposition. For 
a large class of domains $\Omega$, functions in $H^{1/2}(\partial \Omega)$ 
can be extended to elements of $H^1(\Omega)$ and these themselves have a trace 
in $H^{1/2}(\partial \Omega)$. A side result of our research on nonlocal 
quantities is that instead of $H^1(\Omega)$ one could also consider the 
much larger space of all $L^2(\Omega)$-functions $v$ with 

\begin{align}
\int_{\Omega} \int_{\Omega} 
\frac{|v(x)-v(y)|^2}{(|x-y|+\delta_x+\delta_y)^{d+2}}\,dy\,dx < \infty \,, 
\end{align}
where $\delta_z = \operatorname{dist}(z, \partial \Omega)$ 
for $z \in \R^d$. 

\medskip

Trace and extension results have been established for various function spaces 
including Sobolev spaces with fractional order of differentiability. 
To our best knowledge, extensions from the complement of a domain to the whole 
space have not been dealt with so far. One reason for this might be that 
Dirichlet problems with prescribed data on the complement have not yet been 
studied intensively.  

\medskip

Let us formulate our main result, which answers the aforementioned question. We allow the domain $\Omega$ to have a rather rough 
boundary, but we stress the fact that our results are new even for domains $\Omega$ with a smooth boundary. See \autoref{sec:prelims} for the definition of inner respectively exterior thickness of domains. Note that any bounded Lipschitz domain has these properties.  The inner radius of an open set $\DD\subset \R^d$ is defined as 
$\inr(\DD)=\frac12 \sup_{B\subset \DD} \diam(B)$, where the supremum is taken 
over all 
balls $B\subset \DD$.

\begin{thm}\label{thm:answer}
Assume $0 < s < 1$. Let $\Omega\subset \R^d$ be open, interior thick and 
exterior thick such 
that $\partial \Omega$ has Lebesgue measure zero,  and $\inr(\Omega)<\infty$ or 
$\inr(\Omega^c)=\infty$. Then the following is true:
\begin{enumerate}
\item[(a)]
  If $f\in L^p_{loc}(\R^d)$, $1 \leq p < \infty$, satisfies
  \begin{equation}\label{e:space}
    \int_{\Omega} \int_{\R^d}  \frac{|f(x)-f(y)|^p}{|x-y|^{d+sp}}\,dy\,dx < 
\infty,
  \end{equation}
  then
  \begin{equation}\label{e:spacecomp}  
    \int_{\Omega^c} \int_{\Omega^c} 
    \frac{|f(x)-f(y)|^p}{(|x-y|+\delta_x+\delta_y)^{d+sp}}\,dy\,dx < \infty \,.
  \end{equation}
\item[(b)] There exists a linear extension operator $\ext$, which maps 
$L^p_{loc}(\Omega^c)$, $1 \leq p < \infty$, to measurable functions defined on 
$\R^d$ such that 

\begin{align}
(1-s) \int_{\Omega} \int_{\R^d}  
\frac{|\ext(f)(x)-\ext(f)(y)|^p}{|x-y|^{d+sp}}\,dy\,dx \asymp \int_{\Omega^c} 
\int_{\Omega^c} 
    \frac{|f(x)-f(y)|^p}{(|x-y|+\delta_x+\delta_y)^{d+sp}}\,dy\, dx \,,
\end{align}
with constants that depend only on $\inf s$, $p$, $d$ and $\Omega$.
\end{enumerate}
\end{thm}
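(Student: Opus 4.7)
My plan is to prove both parts via a Whitney-decomposition approach adapted to the complement-of-domain setting. Fix a Whitney decomposition $\{Q_i\}$ of $\Omega$ and $\{P_j\}$ of $\Omega^c$, with $\operatorname{diam}(Q_i)\asymp\operatorname{dist}(Q_i,\partial\Omega)$ and likewise for $P_j$. Using the interior and exterior thickness assumptions, I associate to each $Q_i$ an exterior ``reflection'' $P_{i^\ast}\subset\Omega^c$ of comparable diameter and comparable distance from the boundary; the dichotomy $\inr(\Omega)<\infty$ or $\inr(\Omega^c)=\infty$ is exactly what rules out the pathological case in which $\Omega$ contains arbitrarily large interior balls while $\Omega^c$ does not, so that every Whitney cube has a reflection of comparable size.

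The central intermediate step is to show that the weighted seminorm on the right-hand side of (b) is comparable to a discrete quantity
\begin{equation*}
\sum_{j,k}\frac{|f_{P_j}-f_{P_k}|^p\,\operatorname{diam}(P_j)^d}{(\operatorname{diam}(P_j)+\operatorname{diam}(P_k)+\operatorname{dist}(P_j,P_k))^{d+sp}}+\sum_j\int_{P_j}\!\int_{P_j}\!\frac{|f(x)-f(y)|^p}{|x-y|^{d+sp}}\,dy\,dx,
\end{equation*}
where $f_{P_j}=\kint_{P_j} f$. The regularization $\delta_x+\delta_y$ in the denominator is what makes this equivalence hold uniformly: for $x\in P_j$ and $y\in P_k$, the quantity $|x-y|+\delta_x+\delta_y$ is comparable to $\operatorname{diam}(P_j)+\operatorname{diam}(P_k)+\operatorname{dist}(P_j,P_k)$ even when $|x-y|$ itself is much smaller than the cube sizes, as happens when two large Whitney cubes are close. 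With this in hand, I define $\ext(f)=f$ on $\Omega^c$ and $\ext(f)(x)=\sum_i\varphi_i(x)f_{P_{i^\ast}}$ on $\Omega$, where $\{\varphi_i\}$ is a standard partition of unity subordinate to a mild enlargement of $\{Q_i\}$ with $\|\nabla\varphi_i\|_\infty\lesssim\operatorname{diam}(Q_i)^{-1}$. Splitting $\int_\Omega\!\int_{\R^d}$ into interior-interior, interior-exterior, and exterior-exterior contributions, each piece reduces via the bounded overlap of $\{\varphi_i\}$ to the discrete sum above, yielding the $\lesssim$ direction of (b). The near-diagonal Gagliardo terms inside each cube produce the usual $(1-s)^{-1}$ factor, which is exactly balanced by the prefactor $(1-s)$ on the left.

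For part (a), and thereby the $\gtrsim$ direction of (b), I argue directly by mediating averages. Given $x,y\in\Omega^c$, set $r:=|x-y|+\delta_x+\delta_y$; interior thickness provides a ball $M(x,y)\subset\Omega$ with $|M(x,y)|\asymp r^d$ and $|x-z|+|y-z|\lesssim r$ for every $z\in M(x,y)$. The triangle inequality and Jensen give
\begin{equation*}
|f(x)-f(y)|^p\lesssim \kint_{M(x,y)}\bigl(|f(x)-f(z)|^p+|f(y)-f(z)|^p\bigr)\,dz,
\end{equation*}
and since $|x-z|\lesssim r$ one has $r^{-(d+sp)}\lesssim |x-z|^{-(d+sp)}$. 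Integrating in $(x,y)\in\Omega^c\times\Omega^c$ and applying Fubini to exchange the exterior variable $y$ with the interior variable $z\in\Omega$ produces a bound by the integral on the left-hand side of (a). The lower bound in (b) follows by applying (a) to $\ext(f)$, which agrees with $f$ on $\Omega^c$.

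The main technical obstacles I anticipate are twofold. First, the Fubini step in part (a): for fixed $(x,z)\in\Omega^c\times\Omega$ one must describe the slice of admissible $y$'s, namely those for which $z\in M(x,y)$, and absorb the extra $r^{-d}$ coming from Jensen using the inequality $|y-z|\lesssim r\lesssim |x-z|+|x-y|$ to obtain a tail bound. Second, the interior-exterior cross term in part (b), where the coupling of Whitney cubes across $\partial\Omega$ through the reflection must be tracked with constants uniform as $s\to 1^-$, in order to recover the limiting case \autoref{cor:limit_case}. The thickness hypotheses enter exactly at the reflection step, and the $\delta_x+\delta_y$ regularization in the denominator is precisely what permits the clean discrete reformulation that drives the whole strategy.
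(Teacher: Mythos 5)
Your extension operator and the proof of the ``$\lesssim$'' half of (b) follow the paper's \autoref{thm:extension} essentially verbatim (Whitney decomposition of $\Omega$, a partition of unity with $\|\nabla\varphi_i\|_\infty\lesssim\diam(Q_i)^{-1}$, coefficients given by averages of $f$ over reflected exterior cubes), and your reading of the hypothesis $\inr(\Omega)<\infty$ or $\inr(\Omega^c)=\infty$ as the condition guaranteeing that every interior Whitney cube admits a reflection is the same as the paper's. One caveat on your ``central intermediate step'': the discrete model you propose is not equivalent to the weighted seminorm, because for $x,y$ in the same Whitney cube $P_j\subset\Omega^c$ one has $|x-y|+\delta_x+\delta_y\asymp\diam(P_j)$, so the diagonal blocks contribute $\diam(P_j)^{-d-sp}\int_{P_j}\int_{P_j}|f(x)-f(y)|^p\,dy\,dx$ and \emph{not} the Gagliardo seminorms $\int_{P_j}\int_{P_j}|f(x)-f(y)|^p|x-y|^{-d-sp}\,dy\,dx$, which are strictly larger and are not controlled by $|f|^{s,p}_{\Omega^c,\Omega^c}$ (the space imposes no smoothness of $f$ deep inside $\Omega^c$). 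Since $\ext(f)$ only sees the cube averages this false lemma is not actually needed, and the $(1-s)^{-1}$ arises, as in the paper's estimate of $|\phi_Q|^{s,p}_{Q_1,Q_2}$, from integrating $\ell(Q)^{-p}|x-y|^{p}\wedge 1$ against $|x-y|^{-d-sp}$ over neighbouring \emph{interior} cubes, not from Gagliardo terms of $f$.

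The genuine gap is in part (a). You require, for every pair $x,y\in\Omega^c$, a ball $M(x,y)\subset\Omega$ of radius comparable to $r=|x-y|+\delta_x+\delta_y$. No such ball exists once $r$ exceeds a fixed multiple of $\inr(\Omega)$, and under the standing alternative the relevant case is precisely $\inr(\Omega)<\infty$ (e.g.\ $\Omega$ bounded), where the pairs with $|x-y|$ or $\delta_x$ large form a set of infinite measure in $\Omega^c\times\Omega^c$. Interior thickness does not supply what you need: it reflects the Whitney cube of $\interior\Omega^c$ \emph{containing $x$}, producing a mediating set of size $\asymp\delta_x$ (and only when $\delta_x\lesssim\inr(\Omega)$), never a ball at the scale $r$. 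This is exactly how the paper argues in \autoref{thm:norm}: the mediating cube $\widetilde{Q}_1$ depends on $x$ alone, the denominator is bounded below by $(\dist(Q_1,Q_2)+\dist(Q_1,\partial\Omega))^{d+sp}$, the Jensen cost is $\delta_x^{-d}$ rather than $r^{-d}$, and the $y$-integration produces $\int_{\R^d}(1+|y-x_{Q_1}|/\dist(Q_1,\partial\Omega))^{-d-sp}\,dy\asymp \dist(Q_1,\partial\Omega)^{d}/s$. You should rerun your mediating-average computation with $M=M(x)$ of radius $\asymp\delta_x$; your Fubini step then closes with geometric decay over the dyadic scales of $|x-y|+\delta_x$. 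Be aware, finally, that this only controls the integral over $\Omext_{\inr(\Omega)}\times\Omega^c$, which is all that \eqref{eq:comp-semi} asserts: the far block $\{\delta_x,\delta_y\geq\inr(\Omega)\}$ cannot be mediated through $\Omega$ at all and is not reached by either your argument or the paper's, so the reduction of the full claim \eqref{e:spacecomp} to \eqref{e:space} requires a separate treatment of that region which your proposal does not contain.
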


\autoref{thm:answer} follows directly from \autoref{thm:norm} and 
\autoref{thm:extension}. 

\medskip

Considering the limit $s \to 1^{-}$, \autoref{thm:answer} implies a new 
extension-type result for classical Sobolev spaces. We formulate this 
observation in the special case $\Omega = B_1 \subset \R^d$ and refer to 
\autoref{cor:limit_case} for the general case and to  \autoref{rem:Du} for some 
related result. 

\begin{cor}\label{cor:limit_case-example}
Given $1 < p < \infty$, there is a constant $c = c(d,p) \geq 1$ such that 
\[
 \int \limits_{B_1} |\nabla \ext(f)|^p \leq c \int\limits_{B_2\setminus B_1}\! 
\int\limits_{B_2\setminus B_1}
 \frac{|f(x)-f(y)|^p}{(|x-y|+\delta_x+\delta_y)^{d+p}}~dx\,dy
\]
for every $f \in L^p(B_2\setminus B_1)$ such that the right-hand side is finite.
\end{cor}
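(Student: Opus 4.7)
My plan is to apply \autoref{thm:answer}(b) with $\Omega=B_1$ for a range of $s$ close to $1$ and then pass to the limit $s\to 1^-$ via the Bourgain--Brezis--Mironescu (BBM) formula. Since both sides of the claimed estimate are invariant under the shift $f\mapsto f+c$---the right-hand side trivially, and the left-hand side because the equivalence in \autoref{thm:answer}(b) forces constants to extend to a.e.\ constants and hence contribute no gradient---I may first subtract the mean $m_f$ of $f$ on $B_2\setminus B_1$ and assume $f$ has mean zero there; I then extend $f$ by zero to the remainder of $B_1^c$, so $f\in L^p(B_1^c)$.

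Let $\ext$ be the operator provided by \autoref{thm:answer}(b). For every $s\in[\tfrac12,1)$, the theorem yields
\[
(1-s)\int_{B_1}\!\int_{\R^d}\frac{|\ext(f)(x)-\ext(f)(y)|^p}{|x-y|^{d+sp}}\,dx\,dy\;\leq\; C\int_{B_1^c}\!\int_{B_1^c}\frac{|f(x)-f(y)|^p}{(|x-y|+\delta_x+\delta_y)^{d+sp}}\,dx\,dy
\]
with $C$ independent of $s$. The central step is a uniform estimate of the right-hand side by $C\cdot I$, where $I$ denotes the right-hand side of the corollary. I split $B_1^c\times B_1^c$ into $(B_2\setminus B_1)^2$, $(B_1^c\setminus B_2)^2$, and the two symmetric cross regions. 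The integral over $(B_1^c\setminus B_2)^2$ vanishes because $f$ is supported in $B_2\setminus B_1$. On $(B_2\setminus B_1)^2$, the bound $|x-y|+\delta_x+\delta_y\leq 6$ yields $(|x-y|+\delta_x+\delta_y)^{p(1-s)}\leq 6^p$, so this piece is dominated by $6^p\cdot I$. On each cross region, using $\delta_x\geq|x|-1\geq 1$ for $x\in B_1^c\setminus B_2$ together with uniform integrability of $(|x|-1)^{-(d+sp)}$ on $\{|x|>2\}$ for $s\in[\tfrac12,1]$, I obtain a bound by $C\|f\|_{L^p(B_2\setminus B_1)}^p$. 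Finally, since $f$ has mean zero on $B_2\setminus B_1$, Jensen's inequality together with the trivial lower bound $(|x-y|+\delta_x+\delta_y)^{-(d+p)}\geq 6^{-(d+p)}$ on $(B_2\setminus B_1)^2$ gives $\|f\|_{L^p(B_2\setminus B_1)}^p\leq C\cdot I$.

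Combining these estimates yields $(1-s)[\ext(f)]_{s,p,B_1\mid\R^d}^p\leq C\cdot I$ for every $s\in[\tfrac12,1)$. Restricting the outer integration from $\R^d$ to $B_1$ gives the same bound for the usual Gagliardo seminorm $[\ext(f)]_{s,p}(B_1)$. By the BBM converse (valid for $p>1$ and bounded Lipschitz domains), it follows that $\ext(f)\in W^{1,p}(B_1)$ and
\[
\int_{B_1}|\nabla\ext(f)|^p \;\leq\; K_{d,p}\,\liminf_{s\to 1^-}(1-s)[\ext(f)]_{s,p}(B_1)^p \;\leq\; C\cdot I,
\]
which is the claim. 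The hard part is the uniform-in-$s$ estimate of the right-hand side of \autoref{thm:answer}(b): obtaining the Poincaré-type bound $\|f\|_{L^p}^p\lesssim I$ so that the cross terms arising from the zero extension of $f$ do not contaminate the estimate is the critical point; once these bounds are in place, BBM concludes the proof automatically.
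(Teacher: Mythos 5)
Your overall strategy --- a bound on $(1-s)[\ext(f)]_{W^{s,p}(B_1)}^p$ uniform in $s\in[\tfrac12,1)$ followed by the Bourgain--Brezis--Mironescu converse --- is exactly the paper's endgame (\autoref{cor:limit_case-example} is obtained as a special case of \autoref{cor:limit_case}, which is proved this way). Where you genuinely differ is in how the right-hand side gets localized to $(B_2\setminus B_1)^2$. The paper does not pass through the global statement of \autoref{thm:answer}(b); it invokes the localized estimate \eqref{e:int-int} of \autoref{thm:extension} with $\delta=\varepsilon=\inr(B_1)=1$, whose right-hand side is already $|f|^{s,p}_{\Omext_1,\Omext_1}$, an integral over $(B_2\setminus B_1)^2$ only, and the passage from exponent $d+sp$ to $d+p$ is the same elementary bound $|x-y|+\delta_x+\delta_y\le 6$ that you use. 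Because you restrict yourself to the global inequality over $B_1^c\times B_1^c$, you must extend $f$ to all of $B_1^c$, and your zero extension produces cross terms that you then control by $\|f\|_{L^p}^p$ and a mean-zero Poincar\'e inequality. Those estimates are correct (the inner integral over $\{|y|>2\}$ is indeed uniformly bounded for $s\ge\tfrac12$, and Jensen gives the Poincar\'e bound), but the detour buys nothing over the localized route and creates the one soft spot in your argument.

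That soft spot is the reduction to mean zero. Subtracting $m_f$ only on $B_2\setminus B_1$ and then extending by zero is \emph{not} a global shift of the argument of $\ext$: the function you feed into $\ext$ is $(f-m_f)\ind_{B_2\setminus B_1}$, which differs from $f\ind_{B_2\setminus B_1}-m_f$ on $B_1^c\setminus B_2$. Your shift-invariance argument (linearity plus ``$\ext$ of a constant is constant'') handles only globally constant shifts, so from the statements of \autoref{thm:answer} alone you cannot conclude that $\nabla\ext\bigl((f-m_f)\ind_{B_2\setminus B_1}\bigr)=\nabla\ext\bigl(f\ind_{B_2\setminus B_1}\bigr)$ on $B_1$. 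What rescues this is a locality property of the construction: $\ext(h)|_{B_1}$ depends only on $h|_{B_2\setminus B_1}$, since the averages $a_Q$ are taken over reflected cubes all contained in $\Omext_{\inr(\Omega)}$ (cf.\ \autoref{rem:Ithickmod} and \autoref{subs:reflected}); hence any two extensions of $f-m_f$ give the same $\ext(\cdot)|_{B_1}$, and the desired identity follows. Alternatively, working with \eqref{e:int-int} at $\delta=\varepsilon=1$ removes the need to extend $f$ beyond $B_2$ and hence the whole issue. Two further small points: the BBM converse requires $\ext(f)\in L^p(B_1)$, which you should note follows from \autoref{thm:extension}(c) (or from the finiteness of the $s=\tfrac12$ Gagliardo seminorm you have already established); and it requires $p>1$, which is why the corollary excludes $p=1$.
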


\autoref{cor:limit_case-example} is a special case of \autoref{cor:limit_case}.

\medskip

The article is organized a follows. In \autoref{sec:setup_results} we present 
the setup of our work together with the main results, \autoref{thm:norm} and 
\autoref{thm:extension}. \autoref{sec:prelims} 
provides basic properties of the function spaces under consideration. In 
\autoref{sec:proof1} we present the proof of \autoref{thm:norm}. The proof of 
\autoref{thm:extension} is given in \autoref{sec:proof2}.

\section{Setup and detailed results}\label{sec:setup_results}

Throughout the whole paper we assume that 
$\Omega\subset\R^d$ is an open set with the property that both, $\Omega$ and 
$\Omega^c = \R^d \setminus \Omega$, have positive Lebesgue measure. For our 
main result, we will assume some very mild additional assumption. We will use 
the symbol $g \lesssim h$ to denote that the inequality $g \leq c 
h$ holds with a positive constant $c$ that is independent of $g$ and 
$h$. We adopt the convention that $0^a=\infty$ for $a<0$, in particular, 
$\frac{1}{0}=\infty$. We assume $0<p<\infty$ and $0<s\leq 1$.

\medskip

In short, our main result answers the question from the previous section. It 
roughly says that the vector space of all functions $g \in 
L^2_{loc}(\Omega^c)$ with
\begin{align}
\int_{\Omega^c} \int_{\Omega^c} 
\frac{|g(x)-g(y)|^2}{(|x-y|+\delta_x+\delta_y)^{d+2s}}\,dy\,dx < \infty
\end{align}
has the desired properties, see \autoref{thm:answer}. A special feature of our 
result is that the limit 
case $s=1$ can be included. Thus we obtain a new extension result for 
$W^{1,2}(\Omega)$-functions, see below for details.

Let us now explain the set-up in detail. For $f\in L^p(\R^d)$, define
\begin{align}
 |f|_{\WspOm}^p &:= \int_\Omega \int_{\Omega^c} 
\frac{|f(x)-f(y)|^p}{|x-y|^{d+sp}}\,dy\,dx, \label{eq:WspOm-seminorm}\\
 \|f\|_{\WspOm}^p &:= \|f\|_{L^p(\R^d)}^p +  |f|_{\WspOm}^p, 
\label{eq:WspOm-norm}
\end{align}
and let $\WspOm = \{ f\in L^p(\R^d): \|f\|_{\WspOm} < \infty \}$. If $f 
\in L^p(\R^d)$, then $f \in \WspOm$, if $f$ satisfies some regularity 
condition across the boundary $\partial \Omega$, whereas the behavior of $f$ 
far from $\partial \Omega$ is not considered.

\begin{example*}
Consider a bounded Lipschitz domain $\Omega \subset \R^d$ and $f \in 
L^p(\R^d)$, $1 \leq p < \infty$, given by 
$f = \mathbbm{1}_\Omega$. Then the function $f$ belongs to 
$W^{s,p}(\Omega|\Omega^c)$ if and only if $s < \frac{1}{p}$.
\end{example*}

Recall that the inner radius of an open set $\DD\subset \R^d$ is defined as 
$\inr(\DD)=\frac12 \sup_{B\subset \DD} \diam(B)$, where the supremum is taken 
over all 
balls $B\subset \DD$. For $x\in \R^d$, set $\delta_x = \dist(x,\Omega)$. 
For $0<\delta,\varepsilon \leq \infty$ set 
\[
\Omint_\delta = \{x\in\Omega : \dist(x,\Omega^c) \leq \delta\}, \qquad
\Omext_\varepsilon = \{x\in\Omega^c : \dist(x,\Omega) < \varepsilon \}.
\]
Note that $\Omint_{\inr(\Omega)} = \Omega$.
For a function $g$ 
let 
\[
 |g|_{A,B}^{s,p} :=  \int_A \int_{B} 
\frac{|g(x)-g(y)|^p}{(|x-y|+\delta_x+\delta_y)^{d+sp}}\,dy\,dx.
\]

\medskip

The following result introduces a useful (semi)norm that is equivalent to 
$\|f\|_{\WspOm}$ respectively $|f|_{\WspOm}$. For the definition of interior thick 
domains we refer the reader to  \autoref{subsec:whitney}, here let us only mention 
that bounded Lipschitz domains are interior thick.

\begin{thm}\label{thm:norm}
Let $0<p<\infty$ and $0<s\leq 1$.
Suppose that $\Omega\subset \R^d$ is an open interior thick set.
Then there exists a constant $c=c(p,\Omega)$ not depending on $s$, such that
\begin{equation}\label{eq:comp-semi}
c^{-1} |f|_{\WspOm}^p \leq 
  \int_{ \Omega \cup \Omext_{\inr(\Omega)} } \int_{\Omega^c} 
\frac{|f(x)-f(y)|^p}{(|x-y|+\delta_x+\delta_y)^{d+sp}}\,dy\,dx
\leq \frac{c}{s} |f|_{\WspOm}^p
\end{equation}
for every $f\in L^p(\R^d)$. The following norms
  \[
  \left(  \| \cdot \|_{L^p(\Omega, (1+|x|)^{-d-sp}\,dx)}^p + |\cdot|_{\WspOm}^p 
\right)^{1/p},
  \]
  \[
  \left(  \| \cdot \|_{L^p(\R^d, (1+|x|)^{-d-sp}\,dx)}^p + |\cdot|_{\WspOm}^p 
\right)^{1/p} \,,
  \]
  and
  \[
  \left( \|f\|_{L^p(\R^d, (1+|x|)^{-d-sp}\,dx)}^p + 
      \int_{ \R^d } \int_{\Omega^c} 
\frac{|f(x)-f(y)|^p}{(|x-y|+\delta_x+\delta_y)^{d+sp}}\,dy\,dx \right)^{1/p}  
      \]
are comparable with constants depending only on $p$, $\Omega$ and the lower 
bound for $s$.
\end{thm}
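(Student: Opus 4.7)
The overall plan is to split \eqref{eq:comp-semi} into a trivial lower bound and a structural upper bound obtained through a Whitney-type reflection across $\partial\Omega$; the norm equivalence then follows from \eqref{eq:comp-semi} together with an averaging argument over a fixed reference ball $B_0 \subset \Omega$. The lower bound in \eqref{eq:comp-semi} is essentially immediate: restricting the middle integral to $\Omega \times \Omega^c$ and using $\delta_x = 0$ for $x \in \Omega$ and $\delta_y \le |x - y|$ for $y \in \Omega^c$, I obtain $|x-y|+\delta_x+\delta_y \le 2|x-y|$, so the middle term dominates $2^{-(d+sp)} |f|_{\WspOm}^p$.

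For the upper bound I split the outer integration into $\Omega$ (where $\delta_x = 0$ makes $(|x-y|+\delta_x+\delta_y)^{d+sp} \ge |x-y|^{d+sp}$, yielding an immediate $|f|_{\WspOm}^p$ bound) and $\Omext_{\inr(\Omega)}$. The real work lies in the $\Omext_{\inr(\Omega)} \times \Omega^c$ contribution, where both variables are in $\Omega^c$. I would fix a Whitney decomposition $\{Q_j\}$ of $\{x \in \Omega^c : 0 < \delta_x < \inr(\Omega)\}$ with $\diam(Q_j) \asymp \dist(Q_j, \Omega)$, and use interior thickness to attach to each $Q_j$ a reflected ball $\widetilde Q_j \subset \Omega$ of comparable diameter with $\dist(Q_j,\widetilde Q_j)\asymp\diam(Q_j)$, so that $|x-z|\asymp\delta_x$ for $x \in Q_j$, $z \in \widetilde Q_j$, and so that the $\widetilde Q_j$'s have bounded overlap. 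Inserting an averaged reference point via the triangle inequality,
\[
|f(x)-f(y)|^p \lesssim \kint_{\widetilde Q_j}|f(x)-f(z)|^p\,dz + \kint_{\widetilde Q_j}|f(z)-f(y)|^p\,dz.
\]
For the first summand, the direct estimate $\int_{\Omega^c}(|x-y|+\delta_x+\delta_y)^{-d-sp}\,dy \lesssim (sp)^{-1}\delta_x^{-sp}$ combined with $|x-z|\asymp\delta_x$ and $|\widetilde Q_j|\asymp\delta_x^d$ turns the Whitney sum, using bounded overlap of the $\widetilde Q_j$, into $\lesssim (sp)^{-1}|f|_{\WspOm}^p$. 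For the second summand, $\delta_z = 0$ and $|z-y|\le|z-x|+|x-y|\lesssim|x-y|+\delta_x+\delta_y$ permit replacing the compound denominator by $|z-y|^{d+sp}$; integrating out $x \in Q_j$ yields a factor $|Q_j| \asymp |\widetilde Q_j|$ which cancels the averaging, and summation gives $\lesssim |f|_{\WspOm}^p$. Together this produces the advertised $s^{-1}|f|_{\WspOm}^p$ bound.

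The equivalence of the three norms uses a fixed open ball $B_0 \subset \Omega$. For N1 $\asymp$ N2, the only issue is the weighted $L^p$-norm over $\Omega^c$, controlled by $|f(x)|^p \le 2^{p-1}(|f(x)-f(z)|^p+|f(z)|^p)$ averaged over $z\in B_0$, together with $(1+|x|)^{-d-sp} \lesssim |x-z|^{-d-sp}$ (valid since $|x-z|\lesssim 1+|x|$) and integrability of the weight (the constant $\lesssim (sp)^{-1}$ explains the dependence on the lower bound of $s$). For N2 $\asymp$ N3, the direction N2 $\lesssim$ N3 follows by restricting the N3 double integral to $\Omega \times \Omega^c$ exactly as in the lower bound; for N3 $\lesssim$ N2, the piece of the N3 double integral on $(\Omega\cup\Omext_{\inr(\Omega)})\times\Omega^c$ is controlled by \eqref{eq:comp-semi}, while the residual piece on $\{x \in \Omega^c:\delta_x\ge\inr(\Omega)\}\times\Omega^c$ (empty if $\inr(\Omega)=\infty$) is handled by a variant of the $B_0$-averaging argument, absorbing $\Omega$-dependent constants into the final bound. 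I expect the Whitney--reflection step in the upper bound to be the main obstacle: one has to verify carefully that the reflected point $z$ converts $(|x-y|+\delta_x+\delta_y)^{-d-sp}$ into $|x-z|^{-d-sp}$ or $|z-y|^{-d-sp}$ up to harmless constants, and that summation over the bounded-overlap family $\{\widetilde Q_j\}$ recovers exactly the integral $\int_\Omega \int_{\Omega^c}$ that defines $|f|_{\WspOm}^p$; the $(sp)^{-1}$ factor produced by the inner $y$-integration is what forces the $s^{-1}$ on the right-hand side of \eqref{eq:comp-semi}.
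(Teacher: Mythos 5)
Your proof of \eqref{eq:comp-semi} and of the equivalence of the first two norms is correct and follows essentially the same route as the paper: a trivial lower bound, a Whitney decomposition of $\interior\Omega^c$ restricted to cubes of diameter below $\inr(\Omega)$, reflected cubes supplied by interior thickness, and a triangle inequality through the reflected cube, with the $1/s$ produced by the tail integral $\int(|x-y|+\delta_x)^{-d-sp}\,dy\lesssim s^{-1}\delta_x^{-sp}$. The only real difference is cosmetic: you average over $z\in\widetilde Q_j$, whereas the paper routes through an explicit affine point $w(x,y)\in\widetilde Q_1$ and changes variables; the two devices are interchangeable here. (Like the paper, you silently discard $\partial\Omega$ from the outer integration; the paper invokes $|\partial\Omega|=0$ at this point even though it is not a hypothesis of the theorem, so you are no worse off.)

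The genuine gap is in your treatment of the third norm. For the direction $N_3\lesssim N_2$ you must control the double integral over $\{x\in\Omega^c:\delta_x\ge\inr(\Omega)\}\times\Omega^c$, and your claim that ``a variant of the $B_0$-averaging argument'' handles it is not only unjustified but cannot be repaired. Concretely, the averaging argument would reduce matters to bounding $\int_{\{\delta_x\ge\inr(\Omega)\}}|f(x)|^p\int_{\Omega^c}(|x-y|+\delta_x+\delta_y)^{-d-sp}\,dy\,dx\lesssim s^{-1}\int|f(x)|^p\delta_x^{-sp}\,dx$ by the weighted $L^p$ norm with weight $(1+|x|)^{-d-sp}$; for bounded $\Omega$ one has $\delta_x\asymp 1+|x|$ at infinity, so the weight $\delta_x^{-sp}$ is larger by a factor $(1+|x|)^{d}$ and the desired inequality fails. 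An explicit example: take $\Omega=B_1$, $f=\ind_{B(Re_1,1)}$ with $R$ large. Then $\|f\|^p_{L^p(\R^d,(1+|x|)^{-d-sp}dx)}\asymp R^{-d-sp}$ and $|f|^p_{\WspOm}\asymp R^{-d-sp}$, so $N_2^p\asymp R^{-d-sp}$, while restricting the $N_3$ double integral to $x\in B(Re_1,1)$ and $y\in(B_{2R}\setminus B_1)\setminus B(Re_1,1)$ already gives $\gtrsim R^d\cdot R^{-d-sp}=R^{-sp}$. Hence $N_3^p/N_2^p\gtrsim R^d$ is unbounded, and no $f$-independent constant exists. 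You should be aware that the paper's own proof never touches this region either (its estimate only covers $x$ with $\delta_x<\inr(\Omega)$, i.e.\ exactly the set $\Omega\cup\Omext_{\inr(\Omega)}$ appearing in \eqref{eq:comp-semi}); the third-norm claim is therefore only safe when $\inr(\Omega)=\infty$, in which case the residual region is empty, as you correctly note. The fix is not to invent an argument for this region but to recognize that the claimed comparability requires either $\inr(\Omega)=\infty$ or a restriction of the outer integral to $\Omega\cup\Omext_{\inr(\Omega)}$.
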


\begin{rem}
Note that, for the case $s \to 1^{-}$, the different $s$-dependence on the two 
sides in \eqref{eq:comp-semi} is not important.
\end{rem}

\begin{example} Define $f: \R^d \to \R$ by $f(x) = 
\sqrt{|x|-1}$ for $1 < |x| < 2$ and $f(x) = 0$ elsewhere. Assume $\Omega = B_1 
\subset \R^d$ as in \autoref{cor:limit_case-example}. Then both expressions, 
\begin{align*}
\int_{ \Omega \cup \Omext_{\inr(\Omega)} } \int_{\Omega^c} 
\frac{|f(x)-f(y)|^2}{(|x-y|+\delta_x+\delta_y)^{d+2s}}\,dy\,dx 
\quad \text{ and } \quad |f|_{W^{s,2} (\Omega|\Omega^c)}^2
\end{align*}
diverge for $s \to 1^{-}$. As observed in \cite[Sec. 2.2.4]{PV-thesis}, the 
expression $(1-s) |f|_{W^{s,2}(\Omega|\Omega^c)}$ remains bounded. 
\end{example}

\medskip

The following theorem contains our main result.

\begin{thm}\label{thm:extension}
Let $\Omega\subset \R^d$ be an open set which is exterior thick and such that  
$\partial \Omega$ has Lebesgue measure zero,
and $\inr(\Omega)<\infty$ or $\inr(\Omega^c)=\infty$. Then there exists 
a~linear operator $\ext$ which maps $L^1_{loc}(\Omega^c)$ to 
the space of measurable functions on $\R^d$ with the following properties.
\begin{enumerate}
\item[(a)]
For all $f\in L^1_{loc}(\Omega^c)$, $\ext(f)|_{\Omega^c} = f$ and 
$\ext(f)|_\Omega \in C^\infty(\Omega)$.
Furthermore, if $z_0\in \partial \Omega$ and the limit
$ g=\lim_{\Omega^c\ni x\to z_0} f(x)$
exists, then also the limit $\lim_{\Omega\ni x\to z_0} \ext(f)(x)$ exists and 
equals $g$.

\item[(b)]
Let $1\leq p<\infty$.
There exists a~constant $c=c(\Omega,p)$ such that the following inequalities 
hold
for all $f\in L^1_{loc}(\Omega^c)$ and $0<\delta\leq \varepsilon \leq \infty$
\begin{align}
 |\ext(f)|_{\Omint_\delta, \Omext_\varepsilon}^{s,p} 
&\leq \frac{c}{s} |f|_{\Omext_\delta,\Omext_\varepsilon}^{s,p}, \quad 0<s\leq 1, \label{e:int-ext}\\
 |\ext(f)|_{\Omint_\delta, \Omint_\delta}^{s,p} 
 &\leq \frac{c}{s(1-s)} |f|_{\Omext_\delta, \Omext_\delta}^{s,p}, \quad 0<s<1. \label{e:int-int}
\end{align}
In particular,
\begin{align}
 |\ext(f)|_{\Omega, \Omega^c}^{s,p} 
&\leq \frac{c}{s} |f|_{\Omega^c,\Omega^c}^{s,p}, \quad 0<s\leq 1, \label{e:simpl1}\\
 |\ext(f)|_{\R^d, \R^d}^{s,p} 
&\leq \frac{c}{s(1-s)} |f|_{\Omega^c, \Omega^c}^{s,p}, \quad 0<s<1. \label{e:simpl2} 
\end{align}

\item[(c)]
Let $1\leq p < \infty, \beta\in \R$ or $p = \infty, \beta =0$. There 
exists a~constant 
$c=c(\Omega,\beta, p)$ such that the following inequality holds
for all $f\in L^1_{loc}(\Omega^c)$
\[
 \| \ext(f)\|_{L^p(\Omega, (1+|x|)^\beta dx)} \leq c \| f \|_{L^p(\Omext_{\inr(\Omega)},  
(1+|x|)^\beta dx)}.
\]
\end{enumerate}
\end{thm}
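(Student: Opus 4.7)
My plan is to use a Whitney-reflection extension tailored to the nonlocal setting. Decompose $\Omega = \bigcup_\alpha Q_\alpha$ into a Whitney family of dyadic cubes with $\diam(Q_\alpha) \asymp \dist(Q_\alpha, \partial \Omega)$. For each $Q_\alpha$, the exterior thickness hypothesis allows me to select a \emph{reflected} set $Q_\alpha^* \subset \Omega^c$ of Lebesgue measure and diameter comparable to $Q_\alpha$ and lying within a bounded (Whitney) distance from $Q_\alpha$; the side condition $\inr(\Omega) < \infty$ or $\inr(\Omega^c) = \infty$ ensures that such a reflection exists even at the largest Whitney scales. Attach a $C^\infty$ partition of unity $\{\varphi_\alpha\}$ subordinate to a mild dilation of the cubes, with bounded overlap and $|\nabla^k \varphi_\alpha| \lesssim \diam(Q_\alpha)^{-k}$. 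Define
\[
  \ext(f)(x) = f(x) \text{ for } x \in \Omega^c, \qquad \ext(f)(x) = \sum_\alpha \varphi_\alpha(x)\, a_\alpha(f) \text{ for } x \in \Omega,
\]
where $a_\alpha(f) := |Q_\alpha^*|^{-1} \int_{Q_\alpha^*} f\, dz$. Since $|\partial \Omega| = 0$, the boundary values are irrelevant; $\ext$ is obviously linear.

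\textbf{Properties (a) and (c).} Smoothness on $\Omega$ follows from local finiteness of $\{\supp \varphi_\alpha\}$. For the boundary limit statement, if $x \to z_0 \in \partial \Omega$ from inside $\Omega$, then every Whitney cube meeting $x$ shrinks in diameter, so the reflected cubes $Q_\alpha^*$ concentrate around $z_0$ in the Hausdorff sense and the averages $a_\alpha(f)$ all converge to $g$, whence $\ext(f)(x) \to g$. For (c), apply Jensen's inequality to $|\ext(f)(x)|^p \leq \sum_\alpha \varphi_\alpha(x)|Q_\alpha^*|^{-1}\int_{Q_\alpha^*}|f|^p$ and use $|Q_\alpha| \asymp |Q_\alpha^*|$, bounded overlap, and $1 + |x| \asymp 1 + |z|$ for $x \in Q_\alpha$, $z \in Q_\alpha^*$, the latter a consequence of the Whitney construction under the side hypothesis. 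Note that every $Q_\alpha^*$ lies in $\Omext_{\inr(\Omega)}$, yielding the stated domain on the right-hand side.

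\textbf{Property (b).} The geometric heart is the observation that for $x \in Q_\alpha$ and $z \in Q_\alpha^*$, $\delta_z \asymp \diam(Q_\alpha) \asymp \dist(x, \Omega^c)$; in particular, if $y \in \Omega^c$ then $|x-y| \geq \dist(x,\Omega^c) \gtrsim \diam(Q_\alpha)$, so
\[
  |x - y| + \delta_y \;\asymp\; |z - y| + \delta_z + \delta_y .
\]
For \eqref{e:int-ext}, write $\ext(f)(x) - f(y) = \sum_\alpha \varphi_\alpha(x)\, |Q_\alpha^*|^{-1} \int_{Q_\alpha^*} (f(z) - f(y))\, dz$, apply Jensen's inequality, change variables from $x \in Q_\alpha$ to $z \in Q_\alpha^*$ at the cost of the constant $|Q_\alpha|/|Q_\alpha^*|$, invoke the denominator equivalence, and sum with bounded overlap. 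For \eqref{e:int-int}, both $x, y \in \Omega$ so $\delta_x = \delta_y = 0$. Using $\sum_\alpha(\varphi_\alpha(x) - \varphi_\alpha(y)) = 0$, choose a reference index $\alpha_0$ with $x \in Q_{\alpha_0}$ and rewrite
\[
  \ext(f)(x) - \ext(f)(y) = \sum_\alpha (\varphi_\alpha(x) - \varphi_\alpha(y))\big(a_\alpha(f) - a_{\alpha_0}(f)\big).
\]
When the cubes containing $x$ and $y$ are adjacent, $|\nabla \varphi_\alpha| \lesssim \diam(Q_\alpha)^{-1}$ yields the Lipschitz-type estimate $|\ext(f)(x) - \ext(f)(y)| \lesssim \frac{|x-y|}{\diam(Q_{\alpha_0})} \sum_{\alpha \sim \alpha_0} |a_\alpha(f) - a_{\alpha_0}(f)|$, and the oscillations $|a_\alpha - a_{\alpha_0}|$ are controlled by averages of $|f(z) - f(w)|$ over $Q_\alpha^* \cup Q_{\alpha_0}^*$. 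When the cubes are far apart, the bound reduces to \eqref{e:int-ext} by chaining through a common auxiliary point.

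\textbf{Anticipated obstacle.} The quantitative assembly of \eqref{e:int-int} is the delicate step. One must split $\Omint_\delta \times \Omint_\delta$ according to whether the Whitney cubes containing $x$ and $y$ coincide, are neighbors, or are far apart. In the near-diagonal regime, the Lipschitz-type bound produces a singularity $|x - y|^{p - d - sp} = |x - y|^{-d + (1-s)p}$ whose integration over scales up to $\diam(Q_{\alpha_0})$ generates exactly the factor $(1 - s)^{-1}$; this explains the degeneracy at $s = 1$ in \eqref{e:int-int}, absent from \eqref{e:int-ext}. Summing the resulting double sum over pairs of neighboring Whitney cubes without losing track of scales, and recognizing it as an integral against $(|z - w| + \delta_z + \delta_w)^{-d - sp}$ over the reflected cubes, requires a discrete Hardy-type estimate and careful bookkeeping, and this is the main technical hurdle.
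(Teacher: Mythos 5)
Your construction is the same as the paper's: a Whitney decomposition of $\Omega$, reflected cubes in $\Omega^c$ supplied by exterior thickness, a subordinate partition of unity, and averaging $f$ over the reflected cubes. Parts (a) and (c) and the mixed estimate \eqref{e:int-ext} are handled essentially as the paper handles them; your one-step Jensen argument for \eqref{e:int-ext}, based on $|x-y|+\delta_y \asymp |z-y|+\delta_z+\delta_y$ for $x\in Q_\alpha$, $z\in Q_\alpha^*$, $y\in\Omega^c$, is a clean shortcut and is correct. One point you overlook: the thickness definition only yields $\diam\widetilde{Q}\le C\diam Q$ and $\dist(\widetilde{Q},Q)\le C\dist(Q,\partial\Omega)$, so for a cube $Q$ within distance $\delta$ of $\partial\Omega$ the reflected cube a priori lies only in $\Omext_{C'\delta}$, whereas the right-hand sides of \eqref{e:int-ext}--\eqref{e:int-int} require $\Omext_\delta$. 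The paper repairs this by first shrinking the reflected cubes (\autoref{rem:lambda}, with $\lambda=1/125$) so that $\diam\widetilde{Q}\le\lambda\diam Q$, which forces $\widetilde{Q}\subset\Omext_\delta$ even after passing to neighbours of neighbours (\autoref{subs:reflected}); you need this or some variant.

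The more serious issue is that the step you label the main technical hurdle for \eqref{e:int-int} --- assembling the off-diagonal contributions --- is exactly where the proof's real work lies, and you do not carry it out. No discrete Hardy inequality is needed; the paper resolves it with two elementary estimates. First, for non-neighbouring $Q_1,Q_2$ the telescoped bound produces a term of size $\ell(Q_1)^{sp}\,|Q_2|\,\dist(Q_1,Q_2)^{-d-sp}\,|f|^{s,p}_{\widetilde{Q}_1,\widetilde{Q}}$, and summing over $Q_2$ amounts to the tail integral $\int_{\Omega\setminus\bigcup\neio(Q_1)}\ell(Q_1)^{sp}\dist(x,Q_1)^{-d-sp}\,dx\lesssim 1/s$; this is the source of the factor $1/s$ in \eqref{e:int-int}, which your sketch never accounts for (you only explain the $(1-s)^{-1}$ from the near-diagonal regime, which is right). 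Second, the resulting sums of $|f|^{s,p}_{\widetilde{Q}_1,\widetilde{Q}}$ over the relevant (finitely overlapping) families of pairs are recognized as $\lesssim|f|^{s,p}_{\Omext_\delta,\Omext_\delta}$ using the bounded overlap of the reflected cubes (\autoref{r:overlap}) together with the comparability of $|z-w|+\delta_z+\delta_w$ with $\dist(Q_1,Q_2)+\ell(Q_1)+\ell(Q_2)$ for $z\in\widetilde{Q}_1$, $w\in\widetilde{Q}_2$. Since these estimates are what make the theorem true with the stated constants, the proposal as written is an accurate plan rather than a complete proof.
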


From \autoref{thm:norm} and \autoref{thm:extension}, the answer
to the question posed earlier immediately follows, cf. \autoref{thm:answer}.

\begin{cor}\label{cor:limit_case}
Let $\Omega$ be a bounded Lipschitz-domain and $1<p<\infty$. Then there 
exists a~constant $c=c(\Omega,p)$ such that \begin{equation}\label{e:extW1p}
 |\ext(f)|_{W^{1,p}(\Omega)} \leq c 
|f|_{\Omext_{\inr(\Omega)},\Omext_{\inr(\Omega)}}^{1,p}, \quad f\in 
L^1_{loc}(\Omega^c),
\end{equation}
where we take $|\ext(f)|_{W^{1,p}(\Omega)} = \|\nabla \ext(f)\|_{L^p(\Omega)}$, 
if $\ext(f)\in W^{1,p}(\Omega)$, and $|\ext(f)|_{W^{1,p}(\Omega)}=\infty$ 
otherwise.
\end{cor}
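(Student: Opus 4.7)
The plan is to re-open the Whitney-type construction of $\ext$ from the proof of \autoref{thm:extension} and compute $\nabla \ext(f)$ pointwise. One might hope for an abstract argument: start from \eqref{e:int-int} at $s<1$, multiply by $(1-s)$ and let $s \to 1^-$, in the spirit of Bourgain--Brezis--Mironescu. This fails here because the modified kernel $(|x-y|+\delta_x+\delta_y)^{-(d+sp)}$ has no diagonal singularity for interior points $x,y \in \Omega$; for smooth bounded $\ext(f)$ one has $|\ext(f)|_{\Omega,\Omega}^{s,p} \to |\ext(f)|_{\Omega,\Omega}^{1,p} < \infty$ by dominated convergence, so $(1-s)|\ext(f)|_{\Omega,\Omega}^{s,p} \to 0$ and only a trivial inequality survives. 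The explicit Whitney structure of $\ext$ is therefore essential.

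I will use that $\ext(f)|_\Omega$ has the form $\ext(f)(x) = \sum_{Q} \varphi_Q(x)\,\langle f \rangle_{Q^*}$, where $\{Q\}_{Q \in \mathcal{W}}$ is a Whitney decomposition of $\Omega$, $Q \mapsto Q^*$ sends each cube to a reflected region $Q^* \subset \Omext_{\inr(\Omega)}$ with $\ell(Q^*), \dist(Q^*, \partial\Omega) \asymp \ell(Q)$, and $\{\varphi_Q\}$ is a smooth partition of unity with $\sum_Q \varphi_Q \equiv 1$ on $\Omega$, $|\nabla \varphi_Q| \lesssim \ell(Q)^{-1}$, and locally finite overlap. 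Part (a) of \autoref{thm:extension} yields $\ext(f) \in C^\infty(\Omega)$. Differentiating and exploiting $\sum_Q \nabla\varphi_Q \equiv 0$, for $x$ in the Whitney cube $Q_0 \ni x$,
\[
\nabla \ext(f)(x) = \sum_{Q\,:\,\supp\varphi_Q \ni x} \nabla\varphi_Q(x) \bigl( \langle f\rangle_{Q^*} - \langle f\rangle_{Q_0^*}\bigr),
\]
a sum of dimension-bounded length with $|\nabla\varphi_Q(x)| \lesssim \delta_x^{-1}$.

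Jensen's inequality combined with the Whitney-geometric comparabilities $|Q^*|, |Q_0^*| \asymp \delta_x^d$ and $(|y-z|+\delta_y+\delta_z) \asymp \delta_x$ for $y \in Q^*, z \in Q_0^*$ and $Q \sim Q_0$ then gives
\[
\int_{Q_0}|\nabla \ext(f)|^p\,dx \lesssim \sum_{Q \sim Q_0} \int_{Q^*}\!\int_{Q_0^*} \frac{|f(y)-f(z)|^p}{(|y-z|+\delta_y+\delta_z)^{d+p}}\,dy\,dz.
\]
Summing over $Q_0 \in \mathcal{W}$ and invoking the bounded overlap of the pairs $(Q^*, Q_0^*) \subset \Omext_{\inr(\Omega)} \times \Omext_{\inr(\Omega)}$ (this is where the exterior thickness of $\Omega$, automatic for bounded Lipschitz $\Omega$, is needed) produces $\|\nabla \ext(f)\|_{L^p(\Omega)}^p \lesssim |f|^{1,p}_{\Omext_{\inr(\Omega)},\Omext_{\inr(\Omega)}}$. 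The main obstacle is that the argument leans on the explicit Whitney structure of $\ext$ from the proof of \autoref{thm:extension} rather than being an abstract consequence of its statement; in particular, the bounded overlap of the reflected pairs, which converts the discrete Whitney sum back into the continuous integral over $\Omext_{\inr(\Omega)} \times \Omext_{\inr(\Omega)}$, is the key geometric input.
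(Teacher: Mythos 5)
Your constructive argument is correct, and it is a genuinely different route from the paper's. Differentiating the Whitney extension, using $\sum_Q \nabla\phi_Q \equiv 0$ on $\Omega$ to write $\nabla\ext(f)(x)$ as a boundedly finite sum of terms $(a_Q-a_{Q_0})\nabla\phi_Q(x)$ with $|\nabla\phi_Q|\lesssim \ell(Q_0)^{-1}$, controlling $|a_Q-a_{Q_0}|^p$ by Jensen over the reflected cubes (this is \eqref{e:aQ-est} at $s=1$ for neighbouring cubes), and resumming via the bounded overlap of \autoref{r:overlap} is in effect the $s=1$ endpoint of the paper's computation for \eqref{e:int-int}, restricted to the case $Q_2\in\neio(Q_1)$; the $1/(1-s)$ loss disappears because you differentiate instead of taking fractional difference quotients. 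What your approach buys is a self-contained, quantitative proof with no external citation; what the paper's one-line proof buys is brevity, at the price of invoking \cite[Theorem 2]{BBM} as a black box. (In either approach one should also note that $\ext(f)\in L^p(\Omega)$, via part (c) of \autoref{thm:extension}, to conclude $\ext(f)\in W^{1,p}(\Omega)$ rather than just $\nabla\ext(f)\in L^p$.)

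However, your claim that the limiting route ``fails'' is mistaken, and it is exactly the route the paper takes: put $\delta=\varepsilon=\inr(\Omega)$ in \eqref{e:int-int}, multiply by $(1-s)$, let $s\to1^-$, and apply Bourgain--Brezis--Mironescu. Your objection rests on reading $\delta_x=\dist(x,\partial\Omega)$, under which the kernel is indeed nonsingular on the diagonal of $\Omega\times\Omega$. But in \autoref{sec:setup_results}, immediately before $|g|^{s,p}_{A,B}$ is defined, the paper sets $\delta_x=\dist(x,\Omega)$, which vanishes on $\Omega$; hence $|\ext(f)|^{s,p}_{\Omega,\Omega}$ is the genuine Gagliardo seminorm of $\ext(f)$ on $\Omega$, and $(1-s)|\ext(f)|^{s,p}_{\Omega,\Omega}$ does control $\|\nabla\ext(f)\|^p_{L^p(\Omega)}$ in the limit. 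The presence of the factor $1/(1-s)$ on the right-hand side of \eqref{e:int-int} is itself the tell: it arises precisely from integrating the singular kernel $|x-y|^{-d-sp}$ across the diagonal in \eqref{e:phiQ-est}. The paper is admittedly inconsistent (the introduction and \autoref{sec:proof1} write $\delta_z=\dist(z,\partial\Omega)$), but the two conventions agree on $\Omega^c$, which is all that matters for the right-hand sides, and only the $\dist(\cdot,\Omega)$ convention is compatible with the stated proof of the corollary.
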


\begin{proof}
We put $\delta=\varepsilon=\inr(\Omega)$ in \eqref{e:int-int},
multiply its both sides by $(1-s)$ and take $s\to 
1^{-}$. If $|f|_{\Omext_{\inr(\Omega)}, {\Omext_{\inr(\Omega)}}}^{s,p}<\infty$ for 
some~$s$, 
then $f\in L^p(\Omega)$ and inequality \eqref{e:extW1p} follows from 
\cite[Theorem 2]{BBM}. In the other case inequality \eqref{e:extW1p} is trivial.
\end{proof}

\begin{rem}\label{rem:Du}
In the case $p=2$, a result related to \autoref{cor:limit_case} has 
recently 
been established in \cite{DuTi16}. For the trace map $T$, the authors prove an 
estimate of the form
\begin{align}
\|Tu\|_{H^{1/2}(\partial \Omega)} \leq C \|u\|_{\mathcal{S}(\Omega)} \,, 
\end{align}
where 
\begin{align*}
\|u\|^2_{\mathcal{S}(\Omega)} = \|u\|^2_{L^2(\Omega)} + \int\limits_{\Omega} 
\int\limits_{\Omega \cap B(x,\delta(x))} \frac{\big(u(y) - 
u(x)\big)^2}{\delta(x)^{d+2}} \mathbbm{1}_{B_1}(|y-x|) dy \, dx \,,
\end{align*}
and $\delta$ denotes the distance function with respect to $\partial \Omega$. 
The authors of \cite{DuTi16} are interested in models from 
peridynamics. It is interesting that 
our approach to nonlocal function spaces, in the limit case $s \to 1^{-}$, leads 
to 
a similar nonlocal trace theorem as their approach. Note that \cite{DuTi16} 
does not contain extension results like \autoref{thm:extension}.
\end{rem}

\section{Preliminary results}\label{sec:prelims}

In this section, we prove basic properties of the function spaces $\WspOm$ and 
collect several result on inner thick respectively exterior thick domains. 

\subsection{Basic properties \texorpdfstring{of $\WspOm$}{}}

Recall the definitions from \eqref{eq:WspOm-seminorm} and \eqref{eq:WspOm-norm}.

\begin{prop}
  Let $\Omega\subset \R^d$ be an open set, $0<s<1$ and $p\geq 1$.
Then the space $\WspOm$ equipped with the norm $\|\cdot \|_{\WspOm}$ is 
a~Banach space.
\end{prop}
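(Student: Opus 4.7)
The plan is to verify the two standard requirements separately: first that $\|\cdot\|_{\WspOm}$ is a genuine norm on the vector space $\WspOm$, and second that $\WspOm$ is complete with respect to it. The norm structure is essentially a routine matter of reading off properties from $L^p$-norms, while completeness is the only place where a mild argument is needed.

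For the norm axioms, positive definiteness and homogeneity are clear from the corresponding properties of $\|\cdot\|_{L^p(\R^d)}$. The triangle inequality follows by viewing $|f|_{\WspOm}$ as the $L^p$-norm of the measurable function $(x,y) \mapsto (f(x)-f(y))/|x-y|^{(d+sp)/p}$ on $\Omega \times \Omega^c$ with Lebesgue product measure and applying Minkowski's inequality there; combining this with Minkowski in $L^p(\R^d)$ and then using the elementary inequality $(a^p+b^p)^{1/p} \leq a+b$ for $a,b\geq 0$, or equivalently the $\ell^p$ version of Minkowski in $\R^2$, gives $\|f+g\|_{\WspOm} \leq \|f\|_{\WspOm} + \|g\|_{\WspOm}$. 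The fact that $\WspOm$ is closed under addition and scalar multiplication is immediate from these estimates.

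For completeness, let $\{f_n\}$ be a Cauchy sequence in $\WspOm$. Since $\|\cdot\|_{L^p(\R^d)} \leq \|\cdot\|_{\WspOm}$, the sequence is Cauchy in $L^p(\R^d)$, hence converges there to some $f \in L^p(\R^d)$. Passing to a subsequence $\{f_{n_k}\}$ that converges to $f$ almost everywhere on $\R^d$, I would then apply Fatou's lemma to the nonnegative measurable functions
\[
F_{n,k}(x,y) = \frac{|f_n(x)-f_{n_k}(x) - (f_n(y)-f_{n_k}(y))|^p}{|x-y|^{d+sp}}
\]
on $\Omega \times \Omega^c$. Since $f_{n_k}(x) \to f(x)$ and $f_{n_k}(y) \to f(y)$ almost everywhere, $F_{n,k}(x,y) \to |f_n(x)-f(x)-(f_n(y)-f(y))|^p/|x-y|^{d+sp}$ pointwise a.e., and Fatou gives
\[
|f_n - f|_{\WspOm}^p \leq \liminf_{k\to\infty} |f_n - f_{n_k}|_{\WspOm}^p.
\]
Given $\eps>0$, pick $N$ so that $\|f_m - f_n\|_{\WspOm}^p < \eps$ for all $m,n\geq N$; then for $n\geq N$ the right-hand side is at most $\eps$, and combined with the analogous (and immediate) $L^p(\R^d)$ bound $\|f_n-f\|_{L^p(\R^d)}^p \leq \liminf_k \|f_n-f_{n_k}\|_{L^p(\R^d)}^p \leq \eps$, we get $\|f_n - f\|_{\WspOm}^p \leq 2\eps$. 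This shows $f_n - f \in \WspOm$, hence $f = f_n - (f_n - f) \in \WspOm$, and $f_n \to f$ in the $\WspOm$-norm.

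The only potential subtlety, and what I would treat most carefully, is the justification of pointwise a.e.\ convergence of the integrands in $(x,y)$ after passing to a subsequence: one needs a.e.\ convergence in $\R^d$ (standard for $L^p$-convergent sequences via Riesz), and then the exceptional set in $\Omega \times \Omega^c$ is contained in $(N \times \Omega^c) \cup (\Omega \times N)$ with $N$ Lebesgue null, which has product measure zero. Apart from this bookkeeping, the argument is a direct translation of the classical proof that Sobolev--Slobodeckij spaces are Banach spaces to the present one-sided setting.
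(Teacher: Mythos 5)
Your proof is correct and follows essentially the same route as the paper: completeness is obtained by passing to the $L^p(\R^d)$-limit, extracting an a.e.\ convergent subsequence, and applying Fatou's lemma to the seminorm of the differences, exactly as in the paper's argument (you apply Fatou to $|f_n-f|_{\WspOm}$ directly rather than first to the subsequence, a negligible variation). The norm axioms, which the paper dismisses as straightforward, are handled correctly via Minkowski on the product space $\Omega\times\Omega^c$.
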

\begin{proof}
   The proof is straightforward. Let $(f_n)$ be a~Cauchy sequence in $(\WspOm,  
\|\cdot \|_{\WspOm} )$.
  Then $(f_n)$ is a~Cauchy sequence in $L^p(\R^d)$, hence there exists a 
function $f \in 
L^p(\R^d)$ such that $f_n\to f$ in $L^p(\R^d)$.
  Let $f_{k_n}$ be a~subsequence convergent a.e. to  $f$.
  By the Fatou lemma
  \begin{align*}
  |f_{k_n}-f|_{\WspOm}^p &=
  \int_\Omega \int_{\Omega^c} \liminf_{l \to\infty} 
\frac{|(f_{k_n}-f_l)(x)-(f_{k_n}-f_l)(y)|^p}{|x-y|^{d+sp}}\,dy\,dx \\
  &\leq \liminf_{l \to\infty}  \int_\Omega \int_{\Omega^c}  
\frac{|(f_{k_n}-f_l)(x)-(f_{k_n}-f_l)(y)|^p}{|x-y|^{d+sp}}\,dy\,dx \to 0
  \text{ as } n\to \infty\,.
  \end{align*}
  From the above calculation and triangle inequality we deduce that $f\in 
\WspOm$.
  Since $(f_n)$ is a~Cauchy sequence in $(\WspOm,  \|\cdot 
\|_{\WspOm} )$ and its subsequence converges to $f$, the whole 
sequence converges to $f$.
\end{proof}

\begin{rem}
  For $p\in (0,1)$ the space $\WspOm$ equipped with a~metric 
$\rho(f,g):=\|f-g\|_{\WspOm}^p$ is complete. The proof is 
basically the same as above.
  \end{rem}

\begin{prop}
  If a~measurable function $f:\R^d\to\R$ satisfies $|f|_{\WspOm} < \infty$, then
  $f\in L^p(\R^d, (1+|x|)^{-d-sp}\,dx)$.
  Furthermore, the norms
  \[
  \left(  \| \cdot \|_{L^p(\Omega, (1+|x|)^{-d-sp}\,dx)}^p + |\cdot|_{\WspOm}^p 
\right)^{1/p},
  \]
  \[
  \left(  \| \cdot \|_{L^p(\R^d, (1+|x|)^{-d-sp}\,dx)}^p + |\cdot|_{\WspOm}^p 
\right)^{1/p}
  \]
are comparable.
\end{prop}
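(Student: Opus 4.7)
The plan is to reduce both assertions to one averaging argument combined with a single geometric weight comparison. Write $w(x) = (1+|x|)^{-d-sp}$; it is integrable on $\R^d$ because $sp > 0$. The simple observation I will use repeatedly is that if $|a| \le M$ for some $M < \infty$, then for every $b \in \R^d$
\[
|a-b| \le M + |b| \le (M+1)(1+|b|), \qquad \text{hence} \qquad w(b) \lesssim |a-b|^{-d-sp},
\]
with the constant depending only on $M, d, s, p$.

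To prove the comparability of the two norms it suffices to show
\[
\|f\|_{L^p(\Omega^c,w)}^p \lesssim |f|_{\WspOm}^p + \|f\|_{L^p(\Omega,w)}^p.
\]
Fix a ball $B_0 = B(x_0,r_0)$ with $\overline{B_0} \subset \Omega$ (which exists because $\Omega$ is open and nonempty). Starting from $|f(y)|^p \le 2^{p-1}(|f(y)-f(x)|^p+|f(x)|^p)$, I would average in $x \in B_0$, multiply by $w(y)$ and use the weight comparison to absorb $w(y)$ into $|x-y|^{-d-sp}$ on the difference term, then integrate in $y \in \Omega^c$. This bounds $\int_{\Omega^c} |f(y)|^p w(y)\,dy$ by a constant multiple of
\[
\int_{B_0}\!\int_{\Omega^c} \frac{|f(x)-f(y)|^p}{|x-y|^{d+sp}}\,dy\,dx \;+\; \Big(\int_{\R^d} w\Big)\|f\|_{L^p(B_0)}^p.
\]
The first summand is at most $|f|_{\WspOm}^p$ since $B_0 \subset \Omega$; the second is controlled by $\|f\|_{L^p(\Omega,w)}^p$ because $w$ is bounded below on the bounded set $B_0$.

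The first assertion — that $|f|_{\WspOm} < \infty$ alone forces $f \in L^p(\R^d,w)$ — additionally requires showing $\|f\|_{L^p(\Omega,w)} < \infty$. For this I would apply Fubini to $|f|_{\WspOm}^p$ to select a reference point $y_0 \in \Omega^c$ with $|y_0|$ bounded and with $\int_\Omega |f(x)-f(y_0)|^p |x-y_0|^{-d-sp}\,dx$ finite; such a $y_0$ exists since $\Omega^c \cap B(0,R)$ has positive Lebesgue measure for $R$ large. The weight comparison with $a = y_0$ then gives $w(x) \lesssim |x-y_0|^{-d-sp}$ on $\Omega$, so $\int_\Omega |f(x)-f(y_0)|^p w(x)\,dx < \infty$; combined with $|f(y_0)|^p \int_\Omega w < \infty$ and the triangle inequality, this yields $f \in L^p(\Omega,w)$, and the norm comparison from the previous paragraph then gives $f \in L^p(\R^d,w)$.

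The main obstacle is the translation invariance $|f+c|_{\WspOm} = |f|_{\WspOm}$: the seminorm alone does not control $f$ pointwise anywhere on $\Omega$, so one cannot derive $L^p$-integrability on $\Omega$ purely by averaging against $\Omega^c$. The Fubini-plus-bounded-reference-point trick in the preceding paragraph is precisely what breaks this degeneracy. Once such a reference set (the ball $B_0$ for the comparison, the point $y_0$ for the standalone claim) is in place, every other step is a routine averaging/weight-comparison manipulation.
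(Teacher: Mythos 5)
Your argument is correct, and its first half is essentially the paper's: both proofs average the difference $|f(x)-f(y)|^p$ over a fixed bounded reference subset of $\Omega$ and absorb the weight $(1+|y|)^{-d-sp}$ into $|x-y|^{-d-sp}$ via the elementary bound $|x-y|\le C_R(1+|y|)$ for $|x|\le R$, which is exactly how the paper proves the comparability of the two norms. Where you genuinely diverge is the standalone claim that $|f|_{\WspOm}<\infty$ alone forces $f\in L^p(\R^d,(1+|x|)^{-d-sp}dx)$. The paper breaks the constant-invariance degeneracy by truncation: it chooses $n$ so that $F_n=\{|f|\le n\}\cap\Omega\cap B(0,R)$ and $G_n=\{|f|\le n\}\cap\Omega^c\cap B(0,R)$ both have positive measure, and exploits $|f(x)-f(y)|\ge \tfrac12|f(y)|$ when $|f(x)|\le n$ and $|f(y)|>2n$; this yields $\int_{\{|f|>2n\}}|f|^p\,w<\infty$ over all of $\R^d$ in one symmetric stroke, with the remaining set $\{|f|\le 2n\}$ trivial since $w$ is integrable. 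You instead use Tonelli to select a single admissible point $y_0\in\Omega^c\cap B(0,R)$ with finite slice integral and finite value $f(y_0)$, control $f$ on $\Omega$ first, and then bootstrap to $\Omega^c$ through the norm comparison. Both devices are valid: the paper's truncation never invokes a pointwise value of $f$ (only sets of positive measure on which $f$ is bounded), which makes it insensitive to representatives; your route is legitimate here because the proposition concerns a genuine measurable function $f:\R^d\to\R$ and the admissible $y_0$ form a set of full measure in $\Omega^c\cap B(0,R)$, and it has the small advantage of isolating the one nontrivial point (the seminorm's invariance under adding constants) explicitly. For $0<p<1$, which the paper's standing assumptions allow, replace the triangle inequality in your last step by the subadditivity of $t\mapsto t^p$.
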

\begin{proof}
  Let $R>1$ be large enough so that $B(0,R)$ intersects both $\Omega$ and 
$\interior \Omega^c$. For a~given $f$ as in the proposition, let $n \in \N$ be 
such 
that for $E_n=\{x\in \R^d: |f(x)|\leq n\}$ the intersections $F_n = E_n \cap 
\Omega\cap B(0,R)$ and $G_n = E_n \cap \Omega^c\cap B(0,R)$ are of positive 
Lebesgue measure. Note that
  \[
  |w-z| \leq R+|z| \leq R(1+|z|), \qquad \text{for $w\in B(0,R)$ and $z\in 
\R^d$.}
  \]
  Therefore
  \begin{align*}
    2|f|_{\WspOm}
    &\geq
    \int_{F_n} \int_{\Omega^c\setminus E_{2n}}  
\frac{|f(x)-f(y)|^p}{|x-y|^{d+sp}}\,dy\,dx
    + \int_{\Omega \setminus E_{2n}} \int_{G_n} 
\frac{|f(x)-f(y)|^p}{|x-y|^{d+sp}}\,dy\,dx \\
    &\geq \frac{2^{-p}}{R^{d+sp}} \left(
    \int_{F_n} \int_{\Omega^c\setminus E_{2n}} \frac{|f(y)|^p}{(1+|y|)^{d+sp}} 
\,dy 
\,dx + \int_{\Omega \setminus E_{2n}} \int_{G_n} 
\frac{|f(x)|^p}{(1+|x|)^{d+sp}} \,dy\,dx \right) \\
    &\geq
    \frac{2^{-p}(|{F_n}|\wedge |{G_n}|)}{R^{d+sp}}
    \int_{\R^d\setminus E_{2n}} \frac{|f(x)|^p}{(1+|x|)^{d+sp}} \,dx.
  \end{align*}
Choose $n \in \N$ sufficiently large so that $|{F_n}|\wedge |{G_n}|$ is 
positive. Since obviously $\int_{E_{2n}} |f(x)|^p (1+|x|)^{-d-sp} \,dx < 
\infty$, we 
conclude that $f\in L^p(\R^d, (1+|x|)^{-d-sp}\,dx)$. 

  Comparability of the first two norms follows from the following inequalities
  \begin{align*}
    \| f \|_{L^p(\Omega, (1+|x|)^{-d-sp}\,dx)}^p + |f|_{\WspOm}^p
    &
    \gtrsim 
    \int_{\Omega\cap B(0,R)} \left( |f(x)|^p + \int_{\Omega^c} 
\frac{|f(x)-f(y)|^p}{(1+|y|)^{d+sp}}  \,dy \right) \,dx \\
    &\gtrsim 
    \int_{\Omega\cap B(0,R)} \int_{\Omega^c} 
\frac{|f(x)|^p+|f(x)-f(y)|^p}{(1+|y|)^{d+sp}}  \,dy  \,dx \\
    &\gtrsim
     \int_{\Omega^c} \frac{|f(y)|^p}{(1+|y|)^{d+sp}}  \,dy 
  \end{align*}
  with constants depending only on $\Omega$, $R$, $d$, $s$, $p$.
\end{proof}

\subsection{Whitney decomposition, thickness and plumpness}\label{subsec:whitney}
We recall  several geometric notions needed in the 
sequel. They allow us to present our main results for rather general 
domains $\Omega \subset \R^d$. Note that, however, \autoref{thm:norm} and 
\autoref{thm:extension} are new even for domains with a smooth boundary.

\medskip

For a nonempty open set $\DD\subset \R^d$, $\DD\neq \R^d$, we fix 
a Whitney decomposition $\wh(\DD)$ \cite[VI.1]{MR0290095} and write
$\wh_m(\DD)$
for the family of Whitney cubes with side length $2^{-m}$, $m\in\Z$.
If $Q\in\wh(\DD)$, then
\begin{equation}\label{dist_est}
  \diam(Q)\le \dist(Q,\partial \DD)\le 4\diam(Q)\,.
\end{equation}
For any cube $Q$, its side length is denoted by $\ell(Q)$ and its center by 
$x_Q$.
By $Q^*$ we denote a cube with the same center as $Q$, but side length 
$\ell(Q^*)=(1+\varepsilon)\ell(Q)$, where $0<\varepsilon<1/4$ is fixed once for 
all.
Such cubes have the property that 
\[
 \ind_D \leq \sum_{Q\in \wh(\DD)} \ind_{Q^*} \leq M \ind_D
\]
 with some constant $M$ depending only on $d$.

The next two definitions are slightly modified versions of \cite[Definition 
3.1]{Triebel2008}.
Our definitions and  \cite[Definition 3.1]{Triebel2008} coincide if $\DD$ or 
$\DD^c$ has finite inner radius.
In the case when $\inr(\DD)=\inr(\DD^c)=\infty$, if the domain $\DD$ is 
$I$-thick in the sense of \autoref{def:Ithick}, then it is also 
$I$-thick 
in the sense of \cite[Definition 3.1]{Triebel2008}.

\begin{defn}\label{def:Ithick}
An open set $\DD\subset \R^d$ is called \emph{$I$-thick} (\emph{interior 
thick}), if
for every $M>0$ there exists a constant $C$ such that for every cube $Q\in 
\wh(\R^d\setminus \overline{\DD})$ with $\diam Q < M \inr(\DD)$ there exists
a~\emph{reflected} cube $\widetilde{Q} \in \wh(\DD)$ satisfying
\begin{equation}\label{e:Ithick}
 C^{-1} \diam(Q) \leq \diam(\widetilde{Q}) \leq C \diam(Q) \quad\text{and}\quad
 \dist(\widetilde{Q}, Q) \leq C \dist(Q,\partial \DD).
\end{equation}
\end{defn}

\begin{defn}\label{def:Ethick}
An open set  $\DD\subset \R^d$ is called \emph{$E$-thick} (\emph{exterior 
thick}), if
for every $M>0$ there exists a constant $C$ such that for every cube $Q\in 
\wh(\DD)$ with $\diam Q < M \inr(\DD^c)$ there exists
a~\emph{reflected} cube $\widetilde{Q} \in \wh(\R^d\setminus\overline{\DD})$ 
satisfying
\begin{equation}\label{e:Ethick}
 C^{-1} \diam(Q) \leq \diam(\widetilde{Q}) \leq C \diam(Q) \quad\text{and}\quad
 \dist(\widetilde{Q}, Q) \leq C \dist(Q,\partial \DD).
\end{equation}
\end{defn}

\begin{rem}
The definitions of $I$- and $E$-thickness do not depend on the choice of the 
families of Whitney cubes $\wh(\DD)$ and $\wh(\R^d\setminus \overline{\DD})$.
\end{rem}

\begin{rem}\label{rem:lambda}
  Let $\lambda>0$ be fixed.
In \autoref{def:Ithick} we may additionally assume that the reflected 
cubes satisfy
\begin{equation}\label{e:Ithicklambda}
 \diam \widetilde{Q} \leq \lambda \diam Q.
\end{equation}
Indeed, if the opposite inequality holds, then
\[
\dist(\widetilde{Q}, \partial \DD) \geq \diam(\widetilde{Q}) > \lambda \diam Q,
\]
 so in the ball
$B(x_{\widetilde{Q}}, 5\diam \widetilde{Q})$ there exists a~point $z\in \DD$ 
with 
$\dist(z,\partial \DD) = \lambda \diam Q$.
Take $Q'$ to be a~cube from $\wh(\DD)$ containing $z$. Then
\[
\frac{\lambda}{5} \diam Q \leq \frac15 \dist(z,\partial \DD) \leq \diam Q'\leq 
\dist(z,\partial \DD) = \lambda \diam Q,
\]
so $\diam Q$ and $\diam Q'$ are comparable. Moreover, since $z\in 
B(x_{\widetilde{Q}}, 5\diam \widetilde{Q}) \cap Q'$,
we obtain
\[
\dist(Q', \widetilde{Q}) \leq \dist(z,x_{\widetilde{Q}}) \leq 
5\diam\widetilde{Q},
\]
therefore,
\[
\dist(Q', Q) \leq \diam Q' + \dist(Q', \widetilde{Q}) + \diam\widetilde{Q} + 
\dist(\widetilde{Q}, Q) \lesssim \dist(Q,\partial \DD)
\]
with a constant depending only on $\lambda$ and $C$.
Consequently, 
\eqref{e:Ithick} holds also for $Q'$ in place of $\widetilde{Q}$ (perhaps with 
an 
enlarged $C$).
Hence by redefining reflected cubes both \eqref{e:Ithick} and 
\eqref{e:Ithicklambda} hold.

A similar remark applies to \autoref{def:Ethick}.
\end{rem}

\begin{rem}\label{rem:Ithickmod}
In \autoref{def:Ithick} we may additionally assume that the reflected 
cubes satisfy
\begin{equation}\label{e:Ithickmod}
 \widetilde{Q}\subset \{x\in \DD: \dist(x,\partial \DD) < \inr(\DD^c) \}.
\end{equation}
Indeed, by taking $\lambda \leq\frac15$ in \autoref{rem:lambda} and perhaps 
redefining reflected cubes, we obtain
$\dist(w,\partial \DD) \leq 5\diam \widetilde{Q} \leq \diam Q < \inr(\DD^c)$ 
for 
$w\in \widetilde{Q}$, as desired.

A similar remark applies to \autoref{def:Ethick}.
\end{rem}

\begin{rem}\label{r:overlap}
Let $D$ be exterior thick.
The family of all reflected cubes in the sense of the definition above, i.e., 
$\mathcal{F}:=\{\widetilde{Q} : Q\in 
\wh(\DD),\ \diam Q < M \inr(\DD^c)\}$ has the bounded overlap property, i.e.,
there exists a~constant $N$ such that
\[
\sum_{Q\in \wh(\DD)} \ind_{\widetilde{Q}} \leq N\ind_{\R^d\setminus 
\overline{\DD}}.
\]
This estimate holds true because the size of $\widetilde{Q}$ and its distance 
to 
$Q$ are comparable to the 
size of $Q$. An analogous property holds for interior thick sets $D$.
\end{rem}

From  \cite[Proposition 3.6]{Triebel2008} it follows that if $\DD$ is a bounded 
$(\epsilon,\delta)$-domain \cite[Definition 3.1(i)]{Triebel2008}, then $\DD$ is 
$I$-thick and $\partial \DD$ has Lebesgue measure zero. Bounded Lipschitz 
domains are both $I$-thick and $E$-thick \cite[Proposition 3.8]{Triebel2008}.

We will show that the assumption that $\DD$ is an $(\epsilon,\delta)$-domain 
may 
be replaced by a~weaker one. To this end we need the following definition.

\begin{defn}\cite{MR927080, mv}
A set $A\subset \R^d$ is {\em $\kappa$-plump}
with $\kappa\in (0,1)$ (or simply \emph{plump}) if, for each $0<r< 
\mathrm{diam}(A)$ and each $x\in \bar{A}$, there
is $z\in \overline{B(x,r)}$ such that
$B(z,\kappa r)\subset A$.
\end{defn}

\begin{lem}\label{l.plump}
If $\DD\subset \R^d$ is plump, then it is also $I$-thick and $\partial \DD$ has 
$d$-dimensional  Lebesgue measure zero.
\end{lem}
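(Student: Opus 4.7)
The lemma has two claims---$I$-thickness and $|\partial D|=0$---and both follow from the core feature of plumpness: each point of $\bar D$ admits, in any ball around it, an interior ball of comparable size lying in $D$. I would handle these in sequence, using the Whitney estimate \eqref{dist_est} throughout.

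For $I$-thickness, fix $M>0$ and a cube $Q\in \wh(\R^d\setminus \overline{D})$ with $\diam Q<M\inr(D)$. Pick $q\in Q$ and an approximate nearest point $x_0\in\partial D$ with $|x_0-q|\lesssim \diam Q$, using $\dist(Q,\partial D)\leq 4\diam Q$. Set $r:=\min(\diam Q,\diam D/2)$, so that $r<\diam D$ and plumpness applies at $x_0$ with this radius. A preliminary step verifies that plumpness forces $\inr D\geq \kappa \diam D$ (apply plumpness with radii approaching $\diam D$), so the assumption $\diam Q<M\inr D$ yields $r\asymp \diam Q$ with constants depending only on $M$ and $\kappa$. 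Plumpness then produces $z\in \overline{B(x_0,r)}$ with $B(z,\kappa r)\subset D$. Take $\widetilde Q\in \wh(D)$ to be any Whitney cube containing $z$; applying \eqref{dist_est} at $z$ gives $\kappa r/5\leq \diam \widetilde Q\leq r$, and $\dist(\widetilde Q,Q)\leq |z-x_0|+|x_0-q|\lesssim \diam Q\leq \dist(Q,\partial D)$. This verifies \eqref{e:Ithick}.

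For $|\partial D|=0$, note that $D$ open forces $\partial D\cap D=\emptyset$. Fix $x\in \partial D\subset \bar D$ and $0<r<\diam D$; plumpness gives $z\in\overline{B(x,r)}$ with $B(z,\kappa r)\subset D$, and since $|z-x|+\kappa r\leq 2r$ this ball lies in $D\cap B(x,2r)$. Hence
\begin{equation*}
|\partial D\cap B(x,2r)|\leq |B(x,2r)|-|B(z,\kappa r)|=(1-(\kappa/2)^d)\,|B(x,2r)|.
\end{equation*}
Letting $r\to 0^+$, the upper Lebesgue density of $\partial D$ at every point of $\partial D$ is at most $1-(\kappa/2)^d<1$, and the Lebesgue density theorem forces $|\partial D|=0$.

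The only mildly delicate step is ensuring $r\asymp \diam Q$ uniformly in $Q$ despite the cap $\diam D/2$; this is the role of the auxiliary inequality $\inr D\geq \kappa \diam D$ derived from plumpness. Everything else is a direct combination of plumpness with the Whitney estimate.
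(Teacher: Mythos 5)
Your proof is correct and follows essentially the same route as the paper: for $I$-thickness you reflect $Q$ through a nearby boundary point, apply plumpness at a scale comparable to $\diam Q$ to obtain an interior ball of radius $\asymp \kappa\,\diam Q$, and take the Whitney cube of $D$ containing its centre; for $|\partial D|=0$ you simply carry out directly the density argument that the paper delegates to the literature (plumpness $\Rightarrow$ porosity of $\partial D$ $\Rightarrow$ null). One correction to the step you yourself flag as delicate: to get $r=\min(\diam Q,\diam D/2)\asymp\diam Q$ the auxiliary inequality $\inr D\ge\kappa\diam D$ is not the relevant one --- it bounds $\diam D$ \emph{above} by $\inr D/\kappa$, which is the wrong direction. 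What you actually need is the trivial bound $\inr D\le\tfrac12\diam D$, which combined with the hypothesis $\diam Q<M\inr D$ gives $\diam D/2>\diam Q/M$ and hence $r\ge\diam Q/M$; the paper sidesteps the issue by applying plumpness at radius $\diam Q/M$, which is automatically below $\diam D$ for the same reason.
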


\begin{proof}
Let us note that if $\DD$ is plump, then its boundary $\partial \DD$ is 
\emph{porous},
 i.e., there exists a~constant $\alpha$
with the following property: for every $x\in \R^d$ and $0<r\leq 1$, there 
exists 
$y\in B(x,r)$ 
such that $B(y,\alpha r) \subset B(x,r) \setminus \partial \DD$.
Therefore $\partial \DD$ has Lebesgue measure zero, see e.g. \cite{Luukkainen}.

Let $M>1$.
For each cube $Q\in \wh(\interior \DD^c)$ such that $\diam Q < M \inr(\DD)$ we 
will associate a~\emph{reflected} cube $\widetilde{Q} \in  \wh(\DD)$ in the 
following way. Let $y_{Q}\in \partial \DD$ be a~fixed point
satisfying $|x_{Q}-y_{Q}| = \dist(x_{Q}, \partial \DD)$. We consider a~ball 
$B(y_{Q}, \frac{\diam Q}{M})$.
By plumpness condition, there exist a~ball $B\subset B(y_{Q}, \frac{\diam 
Q}{M}) 
\cap \DD$ of radius
$\kappa \frac{\diam Q}{M}$. Let $z$ be its center; as $\widetilde{Q}$ we fix any 
of 
the Whitney cubes from $\wh(\DD)$ containing $z$.

Let $z$ be a~point as above. Then
\[
 \kappa\frac{\diam Q}{M} \leq \dist(z, \partial \DD) \leq \frac{\diam Q}{M},
\]
and hence by properties of Whitney cubes
\[
  \frac{\kappa}{5M} \diam Q \leq  \diam \widetilde{Q} \leq  \frac{\diam Q}{M}.
\]
Furthermore, for $x\in Q$ and $w\in \widetilde{Q}$
\begin{align}
|x-w|
 &\leq |x-x_{Q}| + |x_{Q}-y_{Q}| + |y_{Q}-z| + |z-w| \leq (1+5+1+1)\diam Q \\
&\leq 8\dist(Q, \partial \DD). \label{eq:q1Q}
\end{align}
To summarize, the five numbers $\diam(Q)$, $\diam(\widetilde{Q})$, 
$\dist(Q,\partial 
\DD)$,  $\dist(\widetilde{Q},\partial \DD)$, $\dist(Q,\widetilde{Q})$ are 
comparable 
with constants depending only on $\kappa$ and $M$.
\end{proof}

\cite[Remark 3.7]{Triebel2008} provides an example of an interior 
thick set 
$\Omega$ such that $|\partial \Omega|>0$.
It follows from \autoref{l.plump} that such $\Omega$ is not plump. This 
example is however not completely satisfactory in our case, since in our 
results we assume that $|\partial \Omega|=0$. Therefore we provide another 
example.

\begin{example*}
Consider annuli $A_n = \{x\in \R^2: 2^{-n-1} \leq |x| < 2^{-n}\}$ and let 
$a_n=2^{-n-1}/n$, where $n=1,2,\ldots$.
Let $O_n\subset A_n$ be a~maximal set such that balls centered at points from 
$O_n$ with radii $a_n$ are pairwise disjoint and contained in $A_n$. Clearly 
$O_n\neq \emptyset$. Set
\[
 \Omega = \bigcup_{n=1}^\infty \bigcup_{x\in O_n} B(x, \frac{a_n}{2}).
\]
It is easy to observe that $\Omega$ is both 
interior and exterior thick. However, the largest ball that is 
contained in $B(0,2^{-n})$, has a radius smaller than $3 a_n/2$. Since  
$(3 a_n/2)/(2^{-n}) = 3/(4n)\to 0$, the set $\Omega$ 
is not plump. Moreover, $|\partial\Omega|=0$.
\end{example*}

\section{Proof \texorpdfstring{of \autoref{thm:norm}}{}}\label{sec:proof1}
Let $f \in L^p(\R^d)$, $1 \leq p < \infty$. Recall the definition $\delta_z = \operatorname{dist}(z, \partial \Omega)$ 
for $z \in \R^d$. The first inequality in 
\eqref{eq:comp-semi}
follows from the fact that $|x-y| + \delta_x + \delta_y \leq 3|x-y|$ for $x\in 
\Omega$ and $y\in \Omega^c$. This implies
\begin{align*}
3^{-d-p}  \int_{ \Omega} \int_{\Omega^c} 
\frac{|f(x)-f(y)|^p}{|x-y|^{d+sp}}\,dy\,dx \leq 
  \int_{ \Omega \cup \Omext_{\inr(\Omega)} } \int_{\Omega^c} 
\frac{|f(x)-f(y)|^p}{(|x-y|+\delta_x+\delta_y)^{d+sp}}\,dy\,dx \,.
\end{align*}
This estimate implies the desired inequality. 

The remainder of this section is devoted to the proof of the 
second inequality. We observe that
\begin{align*}
 & \int_{ \{x\in \R^d: \delta_x < \inr(\Omega)\} } \int_{\Omega^c} 
\frac{|f(x)-f(y)|^p}{(|x-y|+\delta_x+\delta_y)^{d+sp}}\,dy\,dx\\
&\leq
  |f|_{ \WspOm}^p   
+ \int_{  \{x\in \Omega^c: \delta_x < \inr(\Omega) \} } \int_{\Omega^c} 
\frac{|f(x)-f(y)|^p}{(|x-y|+\delta_x+\delta_y)^{d+sp}}\,dy\,dx
=: |f|_{ \WspOm}^p + I.
\end{align*}
We note that if $x\in \interior \Omega^c$ satisfies $\delta_x<\inr(\Omega)$, 
then a Whitney cube $Q \in \wh(\interior \Omega^c)$
containing $x$ satisfies $\diam Q \leq \dist(Q,\partial \Omega) < 
\inr(\Omega)$. 
Moreover, since $\partial \Omega$ has Lebesgue measure zero, we obtain
\begin{equation}\label{eq:doublesum}
I \leq \sum_{Q_1 \in \wh^b}
\sum_{Q_2 \in \wh(\interior \Omega^c) }
 \int_{Q_1} \int_{Q_2} 
\frac{|f(x)-f(y)|^p}{(|x-y|+\delta_x+\delta_y)^{d+sp}}\,dy\,dx,
\end{equation}
where 
\[
\wh^b = \{ Q_1 \in \wh(\interior \Omega^c): \diam Q_1 < \inr(\Omega) \}.
\]
We take $M=1$ in the \autoref{def:Ithick} so that $\widetilde{Q}$ exists for 
all cubes $Q\in \wh^b$,
and let $C$ be the corresponding constant.
Let  $Q_1 \in \wh^b$ and $Q_2 \in \wh(\interior \Omega^c)$. For $y\in Q_2$ and 
$w\in \widetilde{Q}_1$
\begin{align}
|y-w| &\leq \dist(y,Q_1) + \diam Q_1 + \dist(Q_1,\tilde{Q_1}) + \diam 
\tilde{Q_1} \nonumber\\
&\leq \dist(y,Q_1) + (5C+1)\diam Q_1. \label{eq:q2Q}
\end{align}
Recall that for any cube $Q$, its side length is denoted by $\ell(Q)$ and its center by 
$x_Q$. For $x\in Q_1$ and $y\in Q_2$ we denote
\[
 w=w(x,y) = x_{\widetilde{Q}_1} + \left(\frac{x-x_{Q_1}}{2\ell(Q_1)} + 
\frac{y-x_{Q_2}}{2\ell(Q_2)} \right) \ell(\widetilde{Q}_1)
\]
and observe that $w\in \widetilde{Q}_1$.

We come back to estimating the double integral in \eqref{eq:doublesum}
\begin{align*}
\int_{Q_1} \int_{Q_2} & \frac{|f(x)-f(y)|^p}{ (|x-y| + \delta_x + 
\delta_y)^{d+sp}}\,dy\,dx\\
&\leq
(2^{p-1}\vee 1)
\int_{Q_1} \int_{Q_2} \frac{|f(x)-f(w(x,y))|^p}{ (\dist(Q_1,Q_2) + \dist(Q_1, 
\partial\Omega))^{d+sp}}\,dy\,dx \\
&\quad +
(2^{p-1}\vee 1)
\int_{Q_1} \int_{Q_2} \frac{|f(w(x,y))-f(y))|^p}{ (\dist(y,Q_1) +\dist(Q_1, 
\partial\Omega))^{d+sp}}\,dy\,dx \\
&=:(2^{p-1}\vee 1) (I_1(Q_1, Q_2) +I_2(Q_1, Q_2)).
\end{align*}
To estimate $I_1(Q_1, Q_2)$, we change the variable $y$ to $w=w(x,y)$ in the 
integral and 
obtain
\begin{align*}
I_1(Q_1, Q_2) &\leq \frac{2^d \ell(Q_2)^d}{\ell(\widetilde{Q}_1)^d} \int_{Q_1} 
\int_{\widetilde{Q}_1} 
        \frac{|f(x)-f(w))|^p}{  (\dist(Q_1,Q_2) + \dist(Q_1, 
\partial\Omega))^{d+sp}}\,dw\,dx \\
&\lesssim
    \int_{Q_1} \int_{\widetilde{Q}_1} \frac{|f(x)-f(w))|^p}{ 
|x-w|^{d+sp}}\,dw\,dx 
\cdot 
\frac{\int_{Q_2} \left(1+\frac{\dist(Q_1,Q_2)}{\dist(Q_1, 
\partial\Omega)}\right)^{-d-sp} \,dy}{\ell(\widetilde{Q}_1)^d}
\end{align*}
In the last passage we have used \eqref{eq:q1Q} with $D:= \Omega$, $Q:= Q_1$ and the inequality $s\leq 1$ 
(although any upper bound for $s$ would suffice). We obtain
\begin{align*}
&\sum_{Q_1 \in \wh^b }  \sum_{Q_2\in \wh(\interior \Omega^c)}
 \!\!\!\!\!\! I_1(Q_1, Q_2)\\
&\lesssim
 \sum_{Q_1 \in \wh^b} 
   \int_{Q_1} \int_{\widetilde{Q}_1} \frac{|f(x)-f(w))|^p}{ 
|x-w|^{d+sp}}\,dw\,dx 
\cdot
\sum_{Q_2} \frac{\int_{Q_2} \left(1+\frac{\dist(Q_1,Q_2)}{\dist(Q_1, 
\partial\Omega)}\right)^{-d-sp} \,dy}{\ell(\widetilde{Q}_1)^d}.
\end{align*}
By properties of Whitney cubes,
\begin{align*}
\sum_{Q_2}& \frac{\int_{Q_2} \left(1+\frac{\dist(Q_1,Q_2)}{\dist(Q_1, 
\partial\Omega)}\right)^{-d-sp} \,dy}{\ell(\widetilde{Q}_1)^d}\\
&\leq
\ell(\widetilde{Q}_1)^{-d} c(d) \int_{\R^d} \left(1+ 
\frac{|y-x_{Q_1}|}{\dist(Q_1, 
\partial\Omega)}\right)^{-d-sp}\,dy \\
&=\Big( \frac{\dist(Q_1, \partial\Omega)}{\ell(\widetilde{Q}_1)} \Big)^d c(d)
\int_{\R^d} (1+|z|)^{-d-sp}\,dz
\leq \frac{c(d,C)}{s},
\end{align*}
where the constant $c(d,C)$ depends only on $d$ and $C$, but not on the cube 
$Q_1$. Thus by \autoref{r:overlap}
\begin{equation}\label{eq:sum-norm}
\sum_{Q_1 \in \wh^b }  \sum_{Q_2\in \wh(\interior \Omega^c)}
 \!\!\!\!\!\! I_1(Q_1, Q_2)
\leq \frac{c(d,p,C)}{s} |f|_{ \WspOm}^p.
\end{equation}
We are left with estimating $I_2(Q_1, Q_2)$. We interchange the order of 
integration and 
change the variable $x$ to $w=w(x,y)$.
By \eqref{eq:q2Q}, this gives us
\begin{align*}
I_2(Q_1, Q_2) &\leq \frac{2^d \ell(Q_1)^d}{\ell(\widetilde{Q}_1)^d} \int_{Q_2} 
\int_{\widetilde{Q}_1} 
        \frac{|f(w)-f(y))|^p}{  (\dist(y,Q_1) + \dist(Q_1, 
\partial\Omega))^{d+sp}}\,dw\,dy \\
&\leq c(d,p,C) \int_{Q_2} \int_{\widetilde{Q}_1} 
        \frac{|f(w)-f(y))|^p}{ |w-y|^{d+sp}}\,dw\,dy.
\end{align*}
By \autoref{r:overlap}, we get an estimate of the form \eqref{eq:sum-norm} 
for $I_2(Q_1, Q_2)$ instead of 
$I_1(Q_1, Q_2)$. The proof is complete. 

Note that the constant in \autoref{thm:norm} depends on $\Omega$ only 
through $d$ and the constant $C$ from \autoref{def:Ithick} taken for 
$M=1$.
\qed

\section{Proof \texorpdfstring{ of \autoref{thm:extension}}{}}\label{sec:proof2}
We may assume that $\Omega\neq \emptyset$.
We fix $M=1$ if $\inr(\Omega^c)=\infty$ and $M=\frac{2\sqrt{d} 
  \inr(\Omega)}{\inr(\Omega^c)}$ if $\inr(\Omega^c)<\infty$,
and we fix $\lambda = 1/125$.
We take reflected cubes and the constant $C$ as in \autoref{def:Ithick} and 
\autoref{rem:lambda}
for these particular choices of $M$ and $\lambda$. 
By \autoref{rem:Ithickmod},
the  reflected cubes satisfy $\eqref{e:Ithickmod}$ with $D=\Omega^c$. 

\subsection{Definition of the extension.}
Let $Q_0=[0,1]^d$. We fix a~function $\psi_0\in C_c^\infty(Q_0^*)$ such that 
$\psi_0=1$  on $Q_0$ and $0\leq \psi_0 \leq 1$. We shift and rescale this 
function to other cubes, i.e., we let
\[
\psi_Q(x)= \psi_0\left(\frac{x-x_{Q}}{\ell(Q)} + x_{Q_0}\right), \quad x\in 
\R^d.
\]
Recall from \autoref{subsec:whitney} that for $Q\in\wh(\DD)$ we have $\diam(Q)\le \dist(Q,\partial \DD)\le 4\diam(Q)$. For any cube $Q$, its side length is denoted by $\ell(Q)$ and its center by $x_Q$. By $Q^*$ we denote a cube with the same center as $Q$, but side length 
$\ell(Q^*)=(1+\varepsilon)\ell(Q)$, where $0<\varepsilon<1/4$ is fixed as above.

We consider the following family of functions
\[
 \phi_Q(x) =\frac{\psi_Q(x)}{\sum_{R\in \wh(\Omega)} \psi_R(x)},\quad x\in \R^d.
\]
Thus $\phi_Q\geq 0$ and $\sum_{Q\in \wh(\Omega)} \phi_Q = \ind_\Omega$. Let $f 
\in L^1_{loc}(\Omega^c)$. Let 
\[
a_Q = \frac{1}{|\widetilde{Q}|} \int_{\widetilde{Q}} f(x)\,dx, \quad Q\in 
\wh(\Omega)\,.
\]
Note that the reflected cube $\widetilde{Q}$ is well defined thanks to the 
choice 
of 
$M$.
We extend a given function $f \in L^1_{loc}(\Omega^c)$ from $\Omega^c$ to 
$\R^d$ 
by defining $\ext(f)$ as follows:
\begin{align*}
 \ext(f)(x) = 
 \begin{cases}
 \sum_{Q\in \wh(\Omega)} a_Q \phi_Q(x) \quad &\text{ if } x \in \Omega \,, \\
 f(x) &\text{ if } x \in \Omega^c \,.
 \end{cases}
\end{align*}
Let $\neio(Q) = \{R\in \wh(\Omega): R\cap Q^* \neq\emptyset \}$ be the 
collection 
 of Whitney cubes intersecting $Q$. Observe that for $x\in Q_1\in \wh(\Omega)$ 
and any $t \in \R$
\begin{equation}\label{e:fdecomp}
 \ext(f)(x) = \sum_{Q\in \neio(Q_1)} a_Q \phi_Q(x) = t + \sum_{Q\in \neio(Q_1)} 
(a_Q-t) \phi_Q(x).
\end{equation}

\subsection{A remark on reflected cubes}\label{subs:reflected}
Let
\[
\wh^{<\delta}(\Omega) = \{ Q:\wh(\Omega) : \dist(Q,\partial \Omega) < \delta\}.
\]
Let us note that if $Q_1 \in \wh^{<\delta}(\Omega)$, $Q_2\in \neio(Q_1)$ and 
$Q_3\in\neio(Q_2)$,
then
\[
\diam Q_3 \leq 5\diam Q_2 \leq 25\diam Q_1.
\]
Therefore, if $z\in \widetilde{Q}_3$, then
\[
\dist(z,\partial \Omega) \leq 5\diam \widetilde{Q}_3 \leq 5\lambda \diam Q_3 
\leq
125\lambda \diam Q_1 \leq 125\lambda \dist(Q_1, \partial\Omega) < \delta,
\]
that is, $\widetilde{Q}_3\subset \Omext_\delta$. In particular, 
$\widetilde{Q}_1, \widetilde{Q}_2 \subset \Omext_\delta$, because we may take 
$Q_3=Q_2$ or $Q_3=Q_1$.

\subsection{\texorpdfstring{An estimate of $|a_{Q_1}-a_{Q}|^p$}{First 
estimate}.}
We claim that, for $Q_1,Q \in \wh(\Omega)$,
\begin{align}\label{e:aQ-est}
|a_{Q_1}-a_{Q}|^p \lesssim \frac{(\dist(Q_1,Q)+\ell(Q_1)+\ell(Q))^{d+sp}}{|Q_1| 
|Q|}
\,|f|_{\widetilde{Q}_1, \widetilde{Q}}^{s,p}\,.
\end{align}
Indeed,
\begin{align*}
|a_{Q_1}-a_{Q}| &=
\frac{1}{|\widetilde{Q}_1| |\widetilde{Q}|}
\left|
 |\widetilde{Q}| \int_{\widetilde{Q}_1} f(y) \,dy -
 |\widetilde{Q}_1| \int_{\widetilde{Q}} f(x) \,dx \right| \\
&\leq
\frac{1}{|\widetilde{Q}_1| |\widetilde{Q}|}
  \int_{\widetilde{Q}_1} \int_{\widetilde{Q}} |f(y)-f(x)| \,dy \, dx \,.
\end{align*}
From $|Q_j| \lesssim |\widetilde{Q}_j|$, and Jensen inequality we deduce
\begin{align*}
|a_{Q_1}-a_{Q}|^p &\lesssim
\frac{1}{|Q_1||Q|}
  \int_{\widetilde{Q}_1} \int_{\widetilde{Q}} |f(y)-f(x)|^p \,dy \, dx,
\end{align*}
and the claim follows.

\subsection{\texorpdfstring{An estimate of $|\phi_Q|_{Q_1,Q_2}^{s,p}$ for 
$s\leq 1$ and arbitrary cubes $Q, Q_1$, $Q_2$.}{Second estimate}}
It is easy to check that $|\nabla \phi_Q| \lesssim \ell(Q)^{-1}$. Therefore
$|\phi_Q(x) - \phi_Q(y)| \lesssim \ell(Q)^{-1}|x-y| \wedge 1$ for all $x,y$. 
As a result, we obtain
\begin{align}
 |\phi_Q|_{Q_1,Q_2}^{s,p} &\lesssim  \int_{Q_1} \int_{Q_2} 
\frac{\ell(Q)^{-p}|x-y|^p \wedge 1}{|x-y|^{d+sp}}\,dy\,dx \nonumber\\ 
 & \lesssim  |Q_1| |Q_2| \Big(\frac{\ell(Q_1)^{p-sp-d} \ell(Q)^{-p}}{1-s} 
\wedge 
\frac{\ell(Q_2)^{p-sp-d} \ell(Q)^{-p}}{1-s} \wedge \dist(Q_1,Q_2)^{-d-sp} 
\Big)\,. 
\label{e:phiQ-est}
\end{align}

We note that the above inequality for $s=1$ is nontrivial only if 
$\dist(Q_1,Q_2)>0$.

\subsection{Proof  of part (b)\texorpdfstring{, formula \eqref{e:int-int}}{}}
It holds
\begin{align}\label{e:f-omega-omega}
  |\ext(f)|_{\Omint_\delta, \Omint_\delta}^{s,p} &\leq
 \sum_{Q_1\in  \wh^{<\delta}(\Omega)} \sum_{Q_2\in  \wh^{<\delta}(\Omega)}  
|\ext(f)|_{Q_1,Q_2}^{s,p}.
\end{align}
For $Q_1$, $Q_2\in \wh^{<\delta}(\Omega)$, we use \eqref{e:fdecomp} twice with 
$t=a_{Q_1}$ and obtain
\begin{align*}
|\ext(f)|_{Q_1,Q_2}^{s,p} 
 &\lesssim 
 \sum_{Q\in \neio(Q_1)\cup\neio(Q_2) } |a_Q-a_{Q_1}|^p |\phi_Q|_{Q_1,Q_2}^{s,p} 
\, .
\end{align*}
If additionally $Q_2 \in \neio(Q_1)$, then for $Q\in \neio(Q_1)\cup\neio(Q_2)$
\begin{align*}
 |a_Q-a_{Q_1}|^p |\phi_Q|_{Q_1,Q_2}^{s,p} &\lesssim  
\frac{(\dist(Q_1,Q)+\ell(Q_1)+\ell(Q))^{d+sp}}{|Q_1| |Q|}
\,|f|_{\widetilde{Q}_1, \widetilde{Q}}^{s,p} 
 \cdot \frac{|Q_1| |Q_2| \ell(Q_1)^{-sp-d}}{1-s} \\
&\lesssim \frac{\,|f|_{\widetilde{Q}_1, \widetilde{Q}}^{s,p}}{1-s}\,. 
\end{align*}
Otherwise, if  $Q_2\in \wh^{<\delta}(\Omega) \setminus \neio(Q_1)$, then 
$\dist(Q_1,Q_2)>0$ and consequently $\ell(Q_1)$, $\ell(Q_2) \lesssim 
\dist(Q_1,Q_2)$. Then
for  $Q\in \neio(Q_1)$
\begin{align*}
 |a_Q-a_{Q_1}|^p |\phi_Q|_{Q_1,Q_2}^{s,p} &\lesssim
 \frac{(\ell(Q_1)+\ell(Q))^{d+sp}}{|Q_1| |Q|}
\,|f|_{\widetilde{Q}_1, \widetilde{Q}}^{s,p}  \cdot \frac{|Q_1| 
|Q_2|}{\dist(Q_1,Q_2)^{d+sp}}\\
&\lesssim  \frac{\ell(Q_1)^{sp} |Q_2|\, |f|_{\widetilde{Q}_1, 
\widetilde{Q}}^{s,p}}{\dist(Q_1,Q_2)^{d+sp}}.
\end{align*}
On the other hand, for $Q\in \neio(Q_2)$
\begin{align*}
 |a_Q-a_{Q_1}|^p |\phi_Q|_{Q_1,Q_2}^{s,p} &\lesssim
 \frac{(\dist(Q_1,Q)+\ell(Q_1)+\ell(Q))^{d+sp}}{|Q_1| |Q|}
\,|f|_{\widetilde{Q}_1, \widetilde{Q}}^{s,p}  \cdot \frac{|Q_1| 
|Q_2|}{\dist(Q_1,Q_2)^{d+sp}}\\
&\lesssim |f|_{\widetilde{Q}_1, \widetilde{Q}}^{s,p}.
\end{align*}
Let $\wh^{<\delta}_{\mathcal{N}}(\Omega) = \{ Q\in \wh(\Omega) : \neio(Q) \cap 
\wh^{<\delta}(\Omega) \neq\emptyset \}$.
Combining the above inequalities yields
\begin{align}
  |\ext(f)|_{\Omint_\delta, \Omint_\delta}^{s,p} 
&\lesssim
 \sum_{Q_1\in  \wh^{<\delta}(\Omega)} \bigg( \sum_{Q_2\in  \neio(Q_1)} 
\sum_{Q\in 
\neio(Q_1)\cup\neio(Q_2) } \frac{\,|f|_{\widetilde{Q}_1, 
\widetilde{Q}}^{s,p}}{1-s} 
\nonumber\\
&\qquad\qquad\qquad+ \sum_{Q_2\in  \wh^{<\delta}(\Omega) \setminus \neio(Q_1)}  
\sum_{Q\in 
\neio(Q_1)} \frac{\ell(Q_1)^{sp} |Q_2|\, |f|_{\widetilde{Q}_1, 
\widetilde{Q}}^{s,p}}{\dist(Q_1,Q_2)^{d+sp}}\nonumber\\
&\qquad\qquad\qquad+ \sum_{Q_2\in  \wh^{<\delta}(\Omega) \setminus \neio(Q_1)}  
\sum_{Q\in 
\neio(Q_2)} |f|_{\widetilde{Q}_1, \widetilde{Q}}^{s,p} \bigg) \nonumber\\
&\lesssim
 \sum_{Q_1\in  \wh^{<\delta}(\Omega)} \bigg( \sum_{Q: \neio(Q)\cap\neio(Q_1) 
\neq\emptyset 
} \frac{\,|f|_{\widetilde{Q}_1, \widetilde{Q}}^{s,p}}{1-s} \nonumber\\
&\qquad\qquad\qquad+   \sum_{Q\in \neio(Q_1)} 
 \, |f|_{\widetilde{Q}_1, \widetilde{Q}}^{s,p} \int_{\Omega\setminus \bigcup 
\neio(Q_1)} 
\frac{\ell(Q_1)^{sp} \,dx}{\dist(x,Q_1)^{d+sp}}\nonumber\\
&\qquad\qquad\qquad+ \sum_{Q_2\in  \wh^{<\delta}_{\mathcal{N}}(\Omega) \setminus 
\{Q_1\}} 
|f|_{\widetilde{Q}_1, \widetilde{Q}_2}^{s,p} \bigg) \nonumber\\
&\lesssim
 \frac{  |f|_{\Omext_\delta, \Omext_\delta}^{s,p}}{s(1-s)}. \label{e:f-o-o}
\end{align}
The fact that in the last expression there are sets $\Omext_\delta$ follows from 
a~remark
in~\autoref{subs:reflected}.

\subsection{Proof of part (b)\texorpdfstring{, formula \eqref{e:int-ext}}{}}
It holds
\begin{align}\label{e:aim}
  |\ext(f)|_{\Omint_\delta, \Omext_\varepsilon}^{s,p} &\leq
 \sum_{Q_1\in  \wh^{<\delta}(\Omega)} \sum_{Q_2\in  \wh^{<\varepsilon}(\interior 
\Omega^c)}  
|\ext(f)|_{Q_1,Q_2 \cap \Omext_\varepsilon}^{s,p}.
\end{align}
Now let  $Q_1\in \wh^{<\delta}(\Omega)$ and $Q_2\in 
\wh^{<\varepsilon}(\interior \Omega^c)$. We again 
use 
\eqref{e:fdecomp}  with $t=a_{Q_1}$,
\begin{align*}
|\ext(f)|_{Q_1,Q_2  \cap \Omext_\varepsilon}^{s,p} 
 & =
\int_{Q_1} \int_{Q_2  \cap \Omext_\varepsilon} \frac{\big| \sum_{Q\in 
\neio(Q_1)} (a_Q - 
a_{Q_1})\phi_Q(x) + a_{Q_1}- f(y)\big|^p}{(|x-y| +\delta_x+\delta_y)^{d+sp}} 
\,dy\,dx\\
 &\lesssim
 \sum_{Q\in \neio(Q_1)} \int_{Q_1} \int_{Q_2}  \frac{\big| (a_Q - 
a_{Q_1})(\phi_Q(x)-\phi_Q(y)) \big|^p}{(|x-y| +\delta_x+\delta_y)^{d+sp}} 
\,dy\,dx \\
 &\qquad+  \dist(Q_1,Q_2)^{-d-sp} \int_{Q_1} \int_{Q_2 \cap \Omext_\varepsilon} 
\Big|  a_{Q_1}- 
f(y)\Big|^p\,dy\,dx =: A+ \frac{B}{\dist(Q_1,Q_2)^{d+sp}}.
\end{align*}
The first term above is estimated  using \eqref{e:aQ-est} and 
\eqref{e:phiQ-est},
\begin{align*}
 A &\lesssim \sum_{Q\in \neio(Q_1)}
\frac{(\dist(Q_1,Q)+\ell(Q_1)+\ell(Q))^{d+sp}}{|Q_1| |Q|}
\,|f|_{\widetilde{Q}_1, \widetilde{Q}}^{s,p}\, 
\frac{|Q_1||Q_2|}{\dist(Q_1,Q_2)^{d+sp}} 
\\
&\lesssim \sum_{Q\in \neio(Q_1)}
|f|_{\widetilde{Q}_1, \widetilde{Q}}^{s,p}\, \frac{ \ell(Q_1)^{sp} 
|Q_2|}{\dist(Q_1,Q_2)^{d+sp}}.
\end{align*}
For the second term,
\begin{align*}
B &= |Q_1| \int_{Q_2 \cap \Omext_\varepsilon} \left| \frac{1}{|\widetilde{Q}_1|} 
\int_{\widetilde{Q}_1} 
(f(x)-f(y)) \,dx \right|^p \,dy \lesssim \int_{Q_2 \cap \Omext_\varepsilon}  
\int_{\widetilde{Q}_1} 
|f(x)-f(y)|^p \,dx  \,dy \\
&\lesssim \dist(Q_1,Q_2)^{d+sp} 
   \int_{Q_2 \cap \Omext_\varepsilon}  \int_{\widetilde{Q}_1} 
\frac{|f(x)-f(y)|^p}{(|x-y| 
+\delta_x+\delta_y)^{d+sp}} \,dx  \,dy = \dist(Q_1,Q_2)^{d+sp} |f|_{Q_2 \cap 
\Omext_\varepsilon, 
\widetilde{Q}_1}^{s,p}.
\end{align*}
Inequalities obtained for $A$ and $B$ together with \eqref{e:aim} yield
\begin{align}
 |\ext(f)|_{\Omint_\delta, \Omext_\varepsilon}^{s,p} 
&\lesssim
  \sum_{Q_1\in  \wh^{<\delta}(\Omega)} 
\bigg(
\sum_{Q\in \neio(Q_1)} 
 |f|_{\widetilde{Q}_1, \widetilde{Q}}^{s,p} \int_{\Omega^c}\frac{\ell(Q_1)^{sp} 
\,dx}{\dist(x,Q_1)^{d+sp}} +
\sum_{Q_2\in  \wh^{<\varepsilon}(\interior \Omega^c)}
 |f|_{Q_2\cap \Omext_\varepsilon, \widetilde{Q}_1}^{s,p} \bigg) \nonumber\\
&\lesssim \frac{1}{s} |f|_{\Omext_\delta, \Omext_\varepsilon}^{s,p}, 
\label{e:f-o-oc}
\end{align}
since by~\autoref{subs:reflected},
the cubes $\widetilde{Q}_1$ and $\widetilde{Q}$ above are contained in 
$\Omext_\delta \subset \Omext_\varepsilon$.

\subsection{Proof of part (b)\texorpdfstring{, formulas \eqref{e:simpl1} and 
\eqref{e:simpl2}}{}}
Formula \eqref{e:simpl1} follows directly by taking $\delta=\inr \Omega$ and 
$\varepsilon=\infty$ in \eqref{e:int-ext} and enlarging the right hand side;
alternatively, one may also apply   \eqref{e:int-ext} to 
$\delta=\varepsilon=\infty$.

To prove \eqref{e:simpl2} we proceed as follows,
\begin{align*}
  |\ext(f)|_{\R^d, \R^d}^{s,p} &=
  |\ext(f)|_{\Omega, \Omega}^{s,p} +  2|\ext(f)|_{\Omega, \Omega^c}^{s,p} + 
|f|_{\Omega^c, \Omega^c}^{s,p} 
  \leq \frac{c}{s(1-s)} |f|_{\Omega^c, \Omega^c}^{s,p},
\end{align*}
by \eqref{e:int-ext} and \eqref{e:int-int} applied to 
$\delta=\varepsilon=\infty$.

We note that the constants depend on $\Omega$ only through $d$, $M$ and $C$.
\subsection{Proof of part (a).}
The smoothness of $\ext(f)$ on $\Omega$ follows directly from the definition.
The proof of the second part is  omitted as it is  straightforward, it is based
on the fact that if the cubes $Q\in \wh(\Omega)$ approach $z\in \partial 
\Omega$,
then so do the reflected cubes $\widetilde{Q}$.
\qed

\subsection{Proof of part (c).}
If $p=\infty$, then
\begin{align*}
  \|\ext(f)\|_{L^\infty(\Omega)}
  &=
  \sup_{Q_1 \in \wh(\Omega)} \, \sup_{x\in Q_1} \left| \sum_{Q\in \neio(Q_1)} 
a_Q \phi_Q(x) \right| \\
  &\leq \sup_{Q_1 \in \wh(\Omega)} \# \neio(Q_1) \cdot 
\|f\|_{L^\infty(\Omext_{\inr(\Omega)})}
  \lesssim \|f\|_{L^\infty(\Omega^c)}.
\end{align*}

Now let $p<\infty$.
We first observe that
\begin{equation}\label{eq:weightcomparison}
 |x|+1 \asymp |\tilde{x}|+1 \qquad \textrm{whenever  $x\in Q\in \wh(\Omega)$ 
and 
$\tilde{x}\in \widetilde{Q} \in \wh(\interior \Omega^c)$,}
\end{equation}
with constants dependent only on the domain $\Omega$. Indeed, let 
$R=\dist(0,\partial \Omega)$, then
\[
|\tilde{x}| \leq |\tilde{x}-x| + |x| \lesssim \diam Q + |x|,
\]
and
\[
\diam Q \leq \dist(Q,\partial \Omega) \leq |x-0| + \dist(0,\partial \Omega)  = 
|x|+R,
\]
so $|\tilde{x}|+1 \lesssim |x|+1$, as claimed. The proof of the opposite 
estimate is similar and omitted.

Let $\omega(x)=(1+|x|)^\beta$. Since the numbers $\# \neio(Q_1)$ for $Q_1 \in 
\wh(\Omega)$ are bounded from above by a~constant depending only on the domain 
$\Omega$, we obtain
\begin{align}
  \|f\|_{L^p(\Omega,\,\omega(x)dx)}^p
  &= \sum_{Q_1 \in \wh(\Omega)} \, \int_{Q_1} \left| \sum_{Q\in \neio(Q_1)} a_Q 
\phi_Q(x) \right|^p \omega(x)\,dx\nonumber\\
  &\lesssim
  \sum_{Q_1 \in \wh(\Omega)} \sum_{Q\in \neio(Q_1)} |a_Q|^p  \int_{Q_1}  
|\phi_Q(x)|^p \omega(x)\,dx \label{eq:pestimate}
  \end{align}
By Jensen inequality, comparability of the sizes of cubes $Q$, $\widetilde{Q}$ 
and 
$Q_1$ as in the sum above, and \eqref{eq:weightcomparison} we can estimate each 
summand as follows
\begin{align*}
  |a_Q|^p  \int_{Q_1}  |\phi_Q(x)|^p \omega(x)\,dx
  &\leq \frac{1}{|\widetilde{Q}|} \int_{\widetilde{Q}} |f(x)|^p\,dx \cdot |Q_1| 
\cdot 
\sup_Q \omega
  \lesssim  \int_{\widetilde{Q}} |f(x)|^p \omega(x)\,dx.
\end{align*}
Using boundedness of $\# \neio(Q_1)$, and \autoref{r:overlap} we obtain from 
the estimate \eqref{eq:pestimate} the following estimate:
\begin{align*}
  \|f\|_{L^p(\Omega,\,\omega(x)dx)}^p
&\lesssim
  \sum_{Q_1 \in \wh(\Omega)} \sum_{Q\in \neio(Q_1)} \int_{\widetilde{Q}} 
|f(x)|^p 
\omega(x)\,dx
\lesssim
  \sum_{Q \in \wh(\Omega)} \int_{\widetilde{Q}} |f(x)|^p \omega(x)\,dx \\
&\lesssim  
  \|f\|_{L^p(\Omext_{\inr(\Omega)},\,\omega(x)dx)}^p.
\end{align*}
This completes the proof of part (c) and thus the proof of 
\autoref{thm:extension}.
\qed


\def\cprime{$'$}

\end{document}